\documentclass[11pt]{article}
% arXiv: ensure PDFLaTeX is used (for PNG/JPG/PDF graphics)
\pdfoutput=1

% Options for packages loaded by other packages (e.g. tikz, tcolorbox loading xcolor)
\PassOptionsToPackage{dvipsnames,svgnames}{xcolor}

% Language and input encoding
\usepackage[english]{babel}
\usepackage[utf8]{inputenc}
\usepackage{amsmath,amssymb,amsthm,amsfonts}
\usepackage[colorlinks=true,linkcolor=blue,citecolor=blue,urlcolor=blue]{hyperref}
\usepackage{authblk}

% Citations
\usepackage[round,authoryear]{natbib}
%\usepackage[square,numbers]{natbib}

% Math packages (rhumj likely loads amsmath, amssymb, amsthm, amsfonts)
\usepackage{mathrsfs} % For \mathscr

% Diagrams, figures, and specific formatting
\usepackage{tikz}
\usetikzlibrary{calc, matrix, positioning}
%\usetikzlibrary{calc, matrix}
\usepackage[many]{tcolorbox} % For colored boxes
\usepackage[all]{xy} % For Xy-pic diagrams
\usepackage{multicol} % For multicols environment
\usepackage{textcomp} % For specific text symbols like \textcelsius
\usepackage{ytableau}        % and delete youngtab
\usepackage{placeins} % For \FloatBarrier, to control float placement
\usepackage{enumitem}

% Utilities
% url is provided by hyperref

% Mathematical operators (DeclareMathOperator is good practice)

% Custom commands

\newcommand{\EE}{\mathbb{E}}

\def\multiset#1#2{\ensuremath{\left(\kern-.3em\left(\genfrac{}{}{0pt}{}{#1}{#2}\right)\kern-.3em\right)}}

% Removed Todo notes commands and package

% Theorem environments (formerly provided by the journal class)
\newtheorem{thm}{Theorem}[section]
\newtheorem{lemma}[thm]{Lemma}

\theoremstyle{definition}
\newtheorem{defin}[thm]{Definition}

\newtheorem{exa}[thm]{Example}

% Metadata helpers to emulate journal macros
\newcommand{\MSC}{}
\newcommand{\Keywords}{}
\newcommand{\subjclass}[1]{\gdef\MSC{#1}}
\newcommand{\keywords}[1]{\gdef\Keywords{#1}}

\title{Asymptotics of the Longest Increasing Subsequence in Random Permutations}

\author{Mihir Gupta}
\affil{The Harker School\\
            500 Saratoga Avenue\\
            San Jose, CA 95129\\
            \texttt{mihirgupta292@gmail.com}}
\subjclass{05A05, 60C05}
\keywords{longest increasing subsequence; Young tableaux; RSK correspondence; Tracy–Widom distribution}

\begin{document}

\maketitle

\begin{abstract}
In this paper, we examine the asymptotic behavior of the longest increasing subsequence (LIS) in a uniformly random permutation of $n$ elements. We rely on the Robinson-Schensted-Knuth correspondence, Young tableaux, and key classical results—including the Erdős-Szekeres theorem and the Hook Length Formula—to demonstrate that the expected LIS length grows as $2\sqrt{n}$. We review the essential variational principles of Logan-Shepp and Vershik-Kerov, which determine the limiting shape of the associated random Young diagrams, and summarize the Baik-Deift-Johansson theorem that links fluctuations of the LIS length to the Tracy-Widom distribution. Our approach focuses on providing conceptual and intuitive explanations of these results, unifying classical proofs into a single narrative and supplying fresh visual examples, while referring the reader to the original literature for detailed proofs and rigorous arguments.
\end{abstract}

\begin{center}
\small \textbf{Mathematics Subject Classification.} \MSC\\[0.25ex]
\small \textbf{Keywords.} \Keywords
\end{center}
%\tableofcontents

\section{Introduction}

Consider a permutation of the set $\{1, 2, \ldots, n\}$ viewed as a sequence $(\sigma_1,\sigma_2,\ldots,\sigma_n)$ of distinct integers. Within this sequence, an \emph{increasing subsequence} is defined by indices $1 \le i_1 < i_2 < \cdots < i_k \le n$ such that $\sigma_{i_1}<\sigma_{i_2}<\cdots<\sigma_{i_k}$. Among all increasing subsequences of $\sigma$, the one with maximal length is the \emph{longest increasing subsequence} (LIS), denoted by $L(\sigma)$. Similarly, one may define the \emph{longest decreasing subsequence} (LDS), denoted by $D(\sigma)$.

A classical result of \citet{erdos1935combinatorial} establishes that any sequence of length $n^2+1$ contains an increasing or decreasing subsequence of length at least $n+1$. This result implies that for large $n$, the LIS of a random permutation of $n$ elements is typically of nontrivial length.

Motivated by questions posed by \citet{ulam1961monte}, the problem of determining the expected length of the LIS in a uniformly random permutation has been studied extensively. In the 1970s, \citet{logan1977variational} and \citet{vershik1977asymptotics} independently established that this expected length grows on the order of $2\sqrt{n}$ as $n \to \infty$. \citet{baik1999distribution} later identified the limiting distribution of the suitably normalized LIS, showing its convergence to the Tracy–Widom distribution \citep{tracy1994level} and connecting it to random matrix theory. The emergence of $2\sqrt{n}$ as the precise asymptotic growth rate, rather than another constant times $\sqrt{n}$ (as initially bounded by Hammersley), and the appearance of the Tracy-Widom distribution, typically found in random matrix theory, were particularly remarkable findings, connecting this combinatorial problem to deeper structures in probability and mathematical physics.

In this paper, we provide an overview of these results and outline the central combinatorial and analytical tools that underpin them. We focus on conveying the main ideas and intuition, deferring technical proofs and details to the original references. Our aim is to clarify the connection between elementary combinatorial constructions—such as the Robinson-Schensted-Knuth correspondence and properties of Young tableaux—and the emerging limit laws that govern the asymptotic behavior of the LIS. We achieve this by collecting classical proofs and results into a single, cohesive narrative, augmented with modern examples such as the airplane boarding problem, to offer a fresh perspective for undergraduate readers.

The paper is structured as follows. In Section \ref{sec:RSK}, we recall the Robinson-Schensted-Knuth correspondence. Section \ref{sec:erdos_szekeres} reviews the Erdős-Szekeres Theorem. In Section \ref{sec:hook_length}, we discuss the Hook Length Formula, crucial for counting Young Tableaux. Section \ref{sec:plancherel} delves into the Plancherel measure. Section \ref{sec:ulam_hammersley} covers the Ulam-Hammersley problem and the asymptotic $2\sqrt{n}$ behavior of the LIS length, detailing the contributions of Hammersley, Logan-Shepp, and Vershik-Kerov. Finally, Section \ref{sec:lis_distribution} presents the Baik-Deift-Johansson theorem on the limiting distribution of the LIS.

\subsection{Background}

\subsubsection{Increasing and Decreasing Subsequences}
A \emph{subsequence} of a sequence $A = (a_1, a_2, \ldots, a_n)$ is any sequence 
$A' = (a_{i_1}, a_{i_2}, \ldots, a_{i_k})$ where $1 \leq i_1 < i_2 < \ldots < i_k \leq n$. This subsequence is \emph{increasing} if $a_{i_1} < a_{i_2} < \ldots < a_{i_k}$, and \emph{decreasing} if $a_{i_1} > a_{i_2} > \ldots > a_{i_k}$. Since this paper concerns permutations of distinct integers, all such subsequences are strictly increasing or decreasing. The one with the greatest length is the \emph{Longest Increasing Subsequence} (LIS), and its counterpart is the \emph{Longest Decreasing Subsequence} (LDS).

% \subsubsection{Increasing and Decreasing Subsequences}
% A \emph{subsequence} of a sequence $A = (a_1, a_2, \ldots, a_n)$ is any sequence
% $A' = (a_{i_1}, a_{i_2}, \ldots, a_{i_k})$ where $1 \leq i_1 < i_2 < \ldots < i_k \leq n$. The subsequence is \emph{strictly increasing} if $a_{i_1} < a_{i_2} < \ldots < a_{i_k}$ and \emph{strictly decreasing} if $a_{i_1} > a_{i_2} > \ldots > a_{i_k}$. Since this paper deals with permutations of distinct integers, any increasing or decreasing subsequence is inherently strict. We therefore use the terms \emph{increasing} and \emph{decreasing} to mean strictly increasing or decreasing. The one with the greatest length is the \emph{Longest Increasing Subsequence} (LIS), and its counterpart is the \emph{Longest Decreasing Subsequence} (LDS).

% \subsubsection{Increasing and Decreasing Subsequences}
% A \emph{subsequence} of a sequence $A = (a_1, a_2, \ldots, a_n)$ is any sequence 
% $A' = (a_{i_1}, a_{i_2}, \ldots, a_{i_k})$ where $1 \leq i_1 < i_2 < \ldots < i_k \leq n$. This subsequence is \emph{increasing} if $a_{i_1} < a_{i_2} < \ldots < a_{i_k}$, and \emph{decreasing} if $a_{i_1} > a_{i_2} > \ldots > a_{i_k}$. Among all increasing subsequences, the one with the greatest length is called the \emph{Longest Increasing Subsequence} (LIS). The decreasing subsequence of greatest length is called the \emph{Longest Decreasing Subsequence} (LDS).

\subsubsection{Permutations and Randomness}
A \emph{permutation} of $\{1,2,\ldots,n\}$ is a rearrangement of its elements. For example, the six permutations of $\{1,2,3\}$ are:
\[
(1\;2\;3), \; (1\;3\;2), \; (2\;1\;3), \; (2\;3\;1), \; (3\;1\;2), \; (3\;2\;1).
\]
% \[
% (1,2,3), \; (1,3,2), \; (2,1,3), \; (2,3,1), \; (3,1,2), \; (3,2,1).
% \]
A \emph{random permutation} of $\{1,2,\ldots,n\}$ is chosen uniformly at random from the $n!$ permutations of $\{1,2,\ldots,n\}$.

\subsubsection{Young Diagrams}
A \emph{partition} of a positive integer $n$ is a way of writing $n$ as a sum of positive integers in non-increasing order: $n = \lambda_1 + \lambda_2 + \cdots + \lambda_k$. Such a partition $\lambda = (\lambda_1, \lambda_2, \ldots, \lambda_k)$ can be visualized as a \emph{Young diagram} by arranging boxes in left-justified rows with $\lambda_i$ boxes in row $i$. For instance, the partition $10 = 5 + 3 + 1 + 1$ corresponds to:
\begin{center}
\begin{ytableau}
*(lightgray) & *(lightgray) & *(lightgray) & *(lightgray) & *(lightgray) \\
*(lightgray) & *(lightgray) & *(lightgray) \\
*(lightgray) \\
*(lightgray)
\end{ytableau}
\end{center}

\subsubsection{Young Tableaux}
A \emph{Standard Young Tableau} (SYT) of shape $\lambda$ is formed by placing the numbers $1,2,\ldots,n$ (where $n$ is the total number of boxes in the diagram) into the boxes of the Young diagram so that the entries increase strictly from left to right across each row and from top to bottom down each column.

\begin{exa}
Figure~\ref{fig:syt_example} shows a Standard Young Tableau filled with the numbers $1$ through $9$.
\begin{figure}[ht]
    \centering
    \begin{tikzpicture}
        % Define colors
        \definecolor{gridcolor}{rgb}{0, 0, 0}
        
        % Draw the grid
        \foreach \x in {0,1,2} {
            \draw[thick, color=gridcolor] (\x, 0) -- (\x, 1);
        }
        \foreach \x in {0,1,2,3} {
            \draw[thick, color=gridcolor] (\x, 1) -- (\x, 2);
        }
        \foreach \x in {0,1,2,3,4} {
            \draw[thick, color=gridcolor] (\x, 2) -- (\x, 3);
        }
        \foreach \y in {0,1,2,3} {
            \draw[thick, color=gridcolor] (0, \y) -- (2, \y);
        }
        \foreach \y in {1,2,3} {
            \draw[thick, color=gridcolor] (0, \y) -- (3, \y);
        }
        \foreach \y in {2,3} {
            \draw[thick, color=gridcolor] (0, \y) -- (4, \y);
        }
    
        % Fill the cells with numbers
        \node at (0.5, 2.5) {1};
        \node at (1.5, 2.5) {3};
        \node at (2.5, 2.5) {5};
        \node at (3.5, 2.5) {8};
    
        \node at (0.5, 1.5) {2};
        \node at (1.5, 1.5) {6};
        \node at (2.5, 1.5) {9};
    
        \node at (0.5, 0.5) {4};
        \node at (1.5, 0.5) {7};
    
        % Draw the arrows
        \draw[thick, ->] (-.3, 2.5) -- (-.3, .5);
        \draw[thick, ->] (.5, 3.3) -- (2.5, 3.3);
    \end{tikzpicture}
    \caption{A Standard Young Tableau for the partition $(4,3,2)$ of $n=9$. Entries increase across rows and down columns.}
    \label{fig:syt_example}
\end{figure}
In this tableau, each row and column is strictly increasing. Such combinatorial structures turn out to be intimately connected to permutations and their increasing subsequences.
\end{exa}

\subsubsection{Robinson-Schensted-Knuth (RSK) Correspondence}
The \emph{RSK correspondence} is a bijection between permutations of \(\{1,2,\ldots,n\}\) and pairs of standard Young tableaux (SYTs) of the same shape. This correspondence encodes the increasing and decreasing subsequences of a permutation into the structure of the associated Young tableaux. In particular, for a permutation \(\sigma\), the length of the longest increasing subsequence \(L(\sigma)\) corresponds to the length of the first row of the tableau obtained from \(\sigma\), while the length of the longest decreasing subsequence \(D(\sigma)\) corresponds to the length of its first column.

\begin{exa}
Consider the permutation \(\sigma = (4\;3\;1\;2)\). Applying the RSK correspondence to \(\sigma\) produces two SYTs of the same shape. One of these tableaux is:
\begin{center}
\ytableaushort{12,3,4} % Using ytableaushort for inline display if preferred, or ytableau environment
\end{center}
Here, the first row has length 2, indicating that \(L(\sigma)=2\), and the first column has length 3, indicating \(D(\sigma)=3\). Thus, the structure of one of the tableaux derived from \(\sigma\) directly reveals the lengths of its longest increasing and decreasing subsequences. This tableau and the second tableau will be described in detail in Section \ref{sec:RSK}.
\end{exa}

In the sections that follow, we leverage these combinatorial tools and the insights provided by RSK and Young tableaux to understand the asymptotic behavior of $L(\sigma)$ for random permutations.

\subsubsection{Probabilistic and Analytic Tools}

\paragraph{Poisson process and Poissonisation.}
A \emph{unit-intensity Poisson process} on the plane places points so that
\(\text{Area}(A)\) equals the expected number of points in any Borel set
\(A\), and counts in disjoint sets are independent.
When we study a random permutation of length \(n\) we first let \(n\) be
\(\mathrm{Poisson}(n)\); this step is called \emph{Poissonisation}.
It turns an unwieldy combinatorial model into a geometric model where
independence holds exactly, not approximately.
All statements for fixed \(n\) are obtained at the end by standard
\emph{de-Poissonisation} inequalities.

\paragraph{Asymptotics.}
All limits are as \(n \to \infty\).  
We use four basic tools and nothing more.

\begin{enumerate}[label=(\alph*),leftmargin=*]
  \item \textbf{Big–\(O\).}  
        We write \(a_n = O(b_n)\) when there exists a constant 
        \(C>0\) with \(|a_n| \le C |b_n|\) for all large \(n\).

  \item \textbf{Big–\(\Theta\).}
        We write \(a_n = \Theta(b_n)\) when there exist positive constants \(c_1, c_2\) with \(c_1|b_n| \le |a_n| \le c_2|b_n|\) for all large \(n\). This signifies a tight bound, meaning \(a_n\) grows at the same rate as \(b_n\).

  \item \textbf{Small–\(o\).}  
        We write \(a_n = o(b_n)\) when \(a_n/b_n \to 0\).

  \item \textbf{Stirling's approximation.}  
        \[
           k! \;=\; \sqrt{2\pi k}\,
                 \Bigl(\tfrac{k}{e}\Bigr)^{k}\!
                 \bigl(1 + O(k^{-1})\bigr),
           \qquad k\ge 1.
        \]

  \item \textbf{Markov's inequality.}  
        For any non-negative random variable \(X\) and any \(t>0\),
        \[
           \Pr(X \ge t) \;\le\; \frac{\EE[X]}{t}.
        \]
\end{enumerate}

\paragraph{Variational calculus}
Sometimes we want to know which shape a large random Young diagram is
most likely to resemble.  The question can be rephrased as: choose a
curve \(y=f(x)\) that bounds the diagram after it has been scaled down
to fit inside a unit square, and decide which \(f\) makes a certain
"energy" \(J[f]\) as small as possible.  
A standard rule from calculus says that a curve that minimizes such an
energy must stay put when we give it a tiny wiggle; in symbols the
first-order change of \(J[f]\) is zero.  This condition (called the
Euler–Lagrange equation) boils down to one simple statement for our
problem:

\smallskip
\centerline{\emph{the scaled hook length takes the same value at every
point under the curve}.}

\smallskip
Solving that statement shows the constant must be \(2\).
When the curve is drawn it meets the \(x\)-axis at \(x=2\);
rescaling back to the original size turns this into the familiar
\(2\sqrt{n}\) growth for the average length of the longest increasing
subsequence.

\subsection{Illustrative Applications}\label{sec:illustrative_apps}

\paragraph{Airplane boarding}\label{sec:airplane_boarding}

Beyond purely theoretical contexts, the LIS problem models real-world queueing phenomena, most famously airplane boarding. Let's model a single-aisle plane with seats numbered $1, 2, \ldots, n$ from front to back. The boarding order is a permutation $\sigma = (\sigma_1, \sigma_2, \ldots, \sigma_n)$, where $\sigma_i$ is the seat number of the $i$-th person in line.

An increasing subsequence $\sigma_{i_1} < \sigma_{i_2} < \ldots < \sigma_{i_k}$ with boarding indices $i_1 < i_2 < \ldots < i_k$ represents a potential "traffic jam." The passenger for seat $\sigma_{i_1}$ is ahead in line but has a seat closer to the front than the passenger for seat $\sigma_{i_2}$. This means the aisle is blocked until the first passenger is seated, creating a chain reaction of delays. The LIS length, $L(\sigma)$, measures the size of the worst-case blockage chain.

Boarding strategies correspond to different classes of permutations. "Back-to-front" boarding, $\sigma = (n, n-1, \ldots, 1)$, has an LIS of length 1, minimizing this type of interference. \citet{Bachmat_2006} used this framework to show that more sophisticated strategies—like boarding window seats first, then middle, then aisle—are effective because they create permutations with a demonstrably short LIS, thereby reducing congestion. This makes LIS analysis a valuable tool for optimizing logistics.

\paragraph{Patience sorting}\label{sec:patience_sorting}

The solitaire-style algorithm patience sorting scans a permutation
$\sigma=\sigma_1\ldots\sigma_n$ from left to right, placing each
$\sigma_i$ on the leftmost pile whose top card is larger and
starting a new pile on the right if no such pile exists. This greedy
rule guarantees that, when the last card is dealt, the number of piles
equals the LIS length $L(\sigma)$; storing only the pile tops in a
binary-searched array turns the procedure into the standard
$O(n\log n)$ LIS algorithm. Hence Hammersley's $2\sqrt{n}$ law implies
that a uniformly shuffled $52$-card deck forms about
$2\sqrt{52}\approx14.4$ piles on average—a statistic leveraged in
computer-vision systems that reconstruct occluded playing-card layouts
and in other domains where fast LIS computation is essential.

\subsection{Notation}
We use the following notation and conventions throughout the paper:

\begin{itemize}
    \item $S_n$ denotes the symmetric group on $n$ elements, i.e., all permutations of the set $\{1,2,\ldots,n\}$.
    \item A permutation $\sigma \in S_n$ is written as $\sigma = (\sigma_1,\sigma_2,\ldots,\sigma_n)$, where $\sigma_i$ is the $i$-th element. For such a sequence $\sigma$, we set $|\sigma|=n$.
    \item Unless otherwise stated, a random permutation $\sigma \in S_n$ is chosen uniformly at random.
    \item $L(\sigma)$ is the length of the longest increasing subsequence (LIS) of $\sigma$, and $D(\sigma)$ is the length of the longest decreasing subsequence (LDS) of $\sigma$.
    \item For a random $\sigma \in S_n$, we write $\ell_n = \mathbb{E}[L(\sigma)]$ to denote the expected LIS length.

    \item A partition of \(n\), denoted by \(\lambda = (\lambda_1,\lambda_2,\ldots,\lambda_k)\) with \(\sum_{i=1}^k \lambda_i = n\), is associated with a Young diagram and, when filled with distinct numbers in a strictly increasing manner across rows and columns, with a Standard Young Tableau.
\end{itemize}

\section{The Robinson-Schensted-Knuth (RSK) Correspondence}\label{sec:RSK}

The RSK correspondence is a well-known combinatorial bijection that associates each permutation with a pair of Standard Young Tableaux (SYTs) having the same shape. This correspondence makes it possible to relate the structure of increasing and decreasing subsequences in a permutation to the shape of certain tableaux.

\subsection{Overview}
Consider a permutation $\sigma = (\sigma_1, \sigma_2, \ldots, \sigma_n)$. The RSK correspondence produces a pair $(P,Q)$ of SYTs of the same shape. The tableau $P$ is constructed via a specific \emph{insertion algorithm} that reflects the pattern of increasing subsequences in $\sigma$. The tableau $Q$ records the order in which elements of $\sigma$ are inserted into $P$.

A key property of this correspondence is that the length of the longest increasing subsequence (LIS) of $\sigma$ equals the length of the first row of $P$. Similarly, the length of the longest decreasing subsequence (LDS) equals the length of the first column of $P$.

\subsection{The Insertion Algorithm}
The RSK insertion algorithm proceeds as follows. Start with $P$ and $Q$ empty.

\begin{enumerate}
    \item For each $\sigma_i$ in $\sigma$:
        \begin{enumerate}
            \item Set $r_1 = \sigma_i$.
            \item In $P$, find the first row from top to bottom into which $r_1$ can be inserted:
                \begin{itemize}
                    \item If $r_1$ is larger than every element in that row, place $r_1$ at the end of the row.
                    \item Otherwise, find the first element in the row that is greater than $r_1$. Replace 
                          that element with $r_1$, and let the displaced element become the new $r_2$ to be 
                          inserted into the next row below using the same procedure.
                \end{itemize}
            \item Repeat this "bumping" process down the tableau until an element is appended to the end of some (possibly empty) row.
            \item In $Q$, record the position of each inserted element. The first inserted element is labeled 
                  "1," the second "2," and so forth, so that $Q$ reflects the order of insertions.
        \end{enumerate}
\end{enumerate}

At the end of this process, the pair $(P,Q)$ is the RSK image of $\sigma$. Both $P$ and $Q$ are SYTs of identical shape.
\subsection{Example}
Consider the permutation $\sigma = (2,4,3,7,6,1,5)$, a full permutation of the set $\{1,2,\ldots,7\}$.  
Insert its elements one by one:

1. Insert 2:  
   \[
   P:\,\begin{ytableau}2\end{ytableau}, \qquad 
   Q:\,\begin{ytableau}1\end{ytableau}
   \]

2. Insert 4:  
   \[
   P:\,\begin{ytableau}2 & 4\end{ytableau}, \qquad 
   Q:\,\begin{ytableau}1 & 2\end{ytableau}
   \]

3. Insert 3 (bumps 4 down):  
   \[
   P:\,\begin{ytableau}2 & 3 \\ 4\end{ytableau}, \qquad 
   Q:\,\begin{ytableau}1 & 2 \\ 3\end{ytableau}
   \]

4. Insert 7:  
   \[
   P:\,\begin{ytableau}2 & 3 & 7 \\ 4\end{ytableau}, \qquad 
   Q:\,\begin{ytableau}1 & 2 & 4 \\ 3\end{ytableau}
   \]

5. Insert 6 (bumps 7 down):  
   \[
   P:\,\begin{ytableau}2 & 3 & 6 \\ 4 & 7\end{ytableau}, \qquad 
   Q:\,\begin{ytableau}1 & 2 & 4 \\ 3 & 5\end{ytableau}
   \]

6. Insert 1 (bumps 2, then 4 down):  
   \[
   P:\,\begin{ytableau}1 & 3 & 6 \\ 2 & 7 \\ 4\end{ytableau}, \qquad 
   Q:\,\begin{ytableau}1 & 2 & 4 \\ 3 & 5 \\ 6\end{ytableau}
   \]

7. Insert 5 (bumps 6, then 7 down):  
   \[
   P:\,\begin{ytableau}1 & 3 & 5 \\ 2 & 6 \\ 4 & 7\end{ytableau}, \qquad 
   Q:\,\begin{ytableau}1 & 2 & 4 \\ 3 & 5 \\ 6 & 7\end{ytableau}
   \]

The top row of $P$ has length $3$, so the length of the LIS of $\sigma$ is $3$.  
The first column of $P$ also has length $3$, so the length of the LDS of $\sigma$ is $3$.

\subsection{Bijection and Consequences}
The RSK correspondence is a bijection:%
\[
  \sigma \in S_n \;\longleftrightarrow\; (P,Q),
\]
where $P$ and $Q$ are standard Young tableaux of the same shape.  
Classic proofs of the bijection may be found in Knuth's original
paper \cite{knuth1970permutations} and in Fulton's textbook,
Chapter 4 \cite{Fulton_1996}.  This correspondence relates the
lengths of increasing and decreasing subsequences in $\sigma$ to the
shape of $P$.

\subsection{Key Properties}
In addition to providing a bijection between permutations and pairs of SYTs, the RSK correspondence translates the arrangement of elements in a permutation into the shape and entries of $P$ and $Q$.

\begin{defin}
For a permutation $\sigma$, the \emph{$j$-th basic subsequence} consists of the elements that occupy the $j$-th position in the first row of $P$ during the insertion process. As elements are inserted, each column in the first row collects a sequence of elements. These sequences are the basic subsequences.
\end{defin}

\begin{lemma}[Strict Decrease in Each Basic Subsequence]
\label{BasicDecrease}
Each basic subsequence is strictly decreasing.
\end{lemma}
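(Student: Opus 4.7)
The plan is to track, for each fixed column index $j$, the temporal history of the cell $(1,j)$ in the tableau $P$ throughout the insertion process and to show that every change of its occupant strictly decreases the value sitting there. I would first pin down two structural invariants of the RSK insertion algorithm: elements enter row $1$ only from the input stream (never from below, since bumping moves values strictly downward), and while an element resides in row $1$ it never migrates laterally to a different cell of that same row. In particular, once cell $(1,j)$ is nonempty, its occupant can change only when a newly inserted value lands precisely at position $(1,j)$.

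Next I would separate the two insertion mechanisms into row $1$ and observe that, once $(1,j)$ is nonempty, only the bumping step can replace its entry. The appending step places the new value at the first empty cell to the right of every current entry of row $1$, so it cannot overwrite an occupied cell; hence every replacement at $(1,j)$ must be a bumping event.

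The final step invokes the bumping rule directly. List in temporal order the successive entries $x_1, x_2, x_3, \ldots$ that occupy $(1,j)$, so that the $j$-th basic subsequence is exactly this list. For each $k \ge 1$, the transition $x_k \to x_{k+1}$ happens when some input value $y$ is inserted into row $1$ and bumps the current entry at $(1,j)$. By the definition of bumping, $y$ displaces the leftmost entry of row $1$ that is strictly greater than $y$; that entry is $x_k$, so $x_{k+1} = y < x_k$. Iterating over $k$ yields the strict decrease asserted in the lemma.

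The only real obstacle is the bookkeeping in the first step: one must confirm carefully that appending never overwrites an occupied cell and that entries of row $1$ never drift sideways during later insertions. Both invariants follow immediately from inspecting the insertion algorithm as stated, after which the lemma is an essentially one-line consequence of the bumping rule.
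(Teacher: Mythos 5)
Your proposal is correct and follows essentially the same route as the paper's proof: both arguments reduce the claim to the observation that, after the first occupant of cell $(1,j)$ arrives by appending, every subsequent change of occupant is a bumping event in which the incoming value is strictly smaller than the value it displaces. Your version simply makes explicit the bookkeeping invariants (no lateral drift within row~$1$, no overwriting by appends) that the paper's one-line proof takes for granted.
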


\begin{proof}
By the insertion rules, when an element is placed into the $j$-th position of the first row, it either replaces a strictly larger element or is appended at the end. Thus, each basic subsequence is formed by elements in strictly decreasing order.
\end{proof}

\begin{lemma}[Linking Adjacent Basic Subsequences]
\label{SchenstedJ-1}
For any element $x$ in the $j$-th basic subsequence with $j \geq 2$, there exists an element $y$ in the $(j-1)$-th basic subsequence such that $y < x$ and $y$ appears before $x$ in $\sigma$.
\end{lemma}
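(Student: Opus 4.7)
The plan is to track what is happening in the first row of $P$ at the exact moment $x$ is inserted into position $j$. Let $x = \sigma_i$, so the insertion of $x$ into the first row takes place during the processing of the $i$-th element of $\sigma$. By the RSK rule, the element $x$ is either (i) appended to the end of the first row, or (ii) placed into the leftmost cell of the first row whose current occupant is strictly larger than $x$. In both cases, the net effect is that $x$ ends up in position $j$ of the first row.

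The next step is to identify the candidate $y$: I take $y$ to be the element sitting in position $j-1$ of the first row of $P$ at the moment immediately before $x$ is inserted. Because $j\ge 2$, such a cell exists in both subcases. In case (i), position $j-1$ must be occupied (otherwise $x$ would not land in column $j$), and its occupant $y$ satisfies $y < x$ since $x$ is being appended. In case (ii), the insertion rule chooses the leftmost column whose current top entry exceeds $x$; therefore position $j-1$ either exceeds nothing less than or equal to $x$ or contains some $y \le x$, and by distinctness $y < x$. Either way, $y < x$.

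From here the remaining two properties follow from bookkeeping. Because $y$ was placed into the first row at some earlier insertion step — that is, during the processing of some $\sigma_{i'}$ with $i' < i$ — the element $y$ necessarily appears before $x$ in $\sigma$. Moreover, since $y$ currently occupies the $(j-1)$-th position in the first row, $y$ belongs to the $(j-1)$-th basic subsequence by the very definition given earlier.

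The only subtlety, which I would flag carefully, is that one must distinguish "the element in position $j-1$ when $x$ is inserted" from the final contents of that cell at the end of the RSK process: the $(j-1)$-th basic subsequence is defined to be the \emph{sequence} of occupants of cell $(1,j-1)$ over time, not just its terminal value, so it is important that $y$ need only pass through position $j-1$ to be a legitimate member. With that definition in hand, the lemma reduces to the observation above and no deeper combinatorial argument is required.
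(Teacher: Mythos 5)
Your proof is correct and follows the same approach as the paper's: the paper likewise takes $y$ to be the occupant of position $j-1$ at the moment $x$ lands in position $j$, noting it was inserted earlier and is smaller. You simply spell out the append/bump case analysis and the membership-in-the-$(j-1)$-th-basic-subsequence point more explicitly than the paper does.
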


\begin{proof}
When $x$ is placed in the $j$-th position, the element in the $(j-1)$-th position was inserted earlier and is smaller. This ensures that each element in a higher-indexed basic subsequence can be paired with a smaller element from the previous one that appears earlier in the permutation.
\end{proof}

These results imply a structured relationship among the basic subsequences. No increasing subsequence can use more than one element from the same basic subsequence, and each higher-indexed subsequence is connected to a lower-indexed one.

\begin{thm}[LIS and the Number of Columns \cite{schensted1961longest}]
For $\sigma \in S_n$, the length of the longest increasing subsequence $L(\sigma)$ equals the number of columns of $P(\sigma)$.
\end{thm}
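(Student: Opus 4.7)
The plan is to prove the equality by sandwiching $L(\sigma)$ between two matching bounds. Let $c$ denote the number of columns of $P(\sigma)$, which is the same as the length of the first row; the basic subsequences of $\sigma$ then partition its entries into exactly $c$ blocks, indexed by the columns $1, 2, \ldots, c$ of the first row.

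For the upper bound $L(\sigma) \le c$, I would appeal to Lemma \ref{BasicDecrease}. Within each basic subsequence, the entries are strictly decreasing in the order they occur in $\sigma$, so any genuinely increasing subsequence of $\sigma$ can pick up at most one element from any single basic subsequence. Since the basic subsequences partition all $n$ entries of $\sigma$ into $c$ blocks, a pigeonhole argument forces every increasing subsequence to have length at most $c$.

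For the lower bound $L(\sigma) \ge c$, I would construct an explicit increasing subsequence of length $c$ by a reverse induction on the basic-subsequence index. Pick any element $x_c$ in the $c$-th basic subsequence. Given $x_j$ in the $j$-th basic subsequence with $j \ge 2$, Lemma \ref{SchenstedJ-1} supplies an element $x_{j-1}$ in the $(j-1)$-th basic subsequence satisfying $x_{j-1} < x_j$ and appearing before $x_j$ in $\sigma$. Iterating this step $c-1$ times produces a chain $x_1, x_2, \ldots, x_c$ whose values and positions in $\sigma$ are both strictly increasing, i.e.\ an increasing subsequence of $\sigma$ of length $c$.

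The two bounds combine to give $L(\sigma) = c$, as desired. The only potential obstacle is verifying that the recursive construction in the lower bound never stalls, but this is precisely the content of Lemma \ref{SchenstedJ-1}: the hypothesis $j \ge 2$ is the sole requirement, and it is available throughout the descent from $j = c$ down to $j = 2$. Once the two lemmas are in hand, the rest is straightforward bookkeeping.
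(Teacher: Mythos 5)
Your proof is correct and follows essentially the same route as the paper: the upper bound $L(\sigma)\le c$ from Lemma~\ref{BasicDecrease} (at most one element per basic subsequence) and the lower bound $L(\sigma)\ge c$ by chaining backwards through Lemma~\ref{SchenstedJ-1}. You simply spell out the reverse-induction construction that the paper's terser proof leaves implicit.
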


\begin{proof}
Let $k$ be the number of columns in $P(\sigma)$. By Lemma \ref{BasicDecrease}, one cannot form an increasing subsequence by taking more than one element from the same basic subsequence. Combined with Lemma \ref{SchenstedJ-1}, it follows that there exists an increasing subsequence of length $k$. Hence, $L(\sigma) = k$.
\end{proof}

\begin{thm}[LDS and the Number of Rows \cite{schensted1961longest}]
For $\sigma \in S_n$, the length of the longest decreasing subsequence $D(\sigma)$ equals the number of rows of $P(\sigma)$.
\end{thm}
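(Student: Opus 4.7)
The plan is to mirror the proof of the previous theorem, swapping the roles of rows and columns and of increasing/decreasing. For each row $i$ of $P(\sigma)$, define the $i$-th \emph{dual basic subsequence} $R_i$ to consist of the values that successively occupy the leftmost cell $(i,1)$ of $P$ during the insertion. An analog of Lemma~\ref{BasicDecrease} should hold: each $R_i$ is strictly decreasing in value, because the insertion rule replaces the entry at $(i,1)$ only with an incoming value that is strictly smaller. An analog of Lemma~\ref{SchenstedJ-1} should also hold: every value $x$ that ever sits at $(i,1)$ with $i\ge 2$ arrived there as a bumped element from row $i-1$, so there exists $y>x$ previously resident at $(i-1,1)$ whose $\sigma$-index precedes that of $x$.

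With these dual lemmas in hand, the theorem splits into two inequalities. For $D(\sigma)\le k$, where $k$ is the number of rows, I would argue that a decreasing subsequence $\sigma_{a_1}>\sigma_{a_2}>\cdots>\sigma_{a_m}$ with $a_1<\cdots<a_m$ produces bumping paths that, by the standard non-crossing property, terminate in strictly descending rows; hence $m\le k$. For $D(\sigma)\ge k$, I iterate the dual linking lemma beginning with an element of $R_k$ and climbing through $R_{k-1},\ldots,R_1$ to extract a chain $x_k<x_{k-1}<\cdots<x_1$ whose $\sigma$-indices are strictly increasing, which is exactly a decreasing subsequence of $\sigma$ of length $k$.

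The main obstacle is formalising the dual linking lemma: whereas the columns of the first row are fed directly by $\sigma$, the cells $(i,1)$ for $i\ge 2$ are populated only by bumped values, so one must track the chronological correspondence between bumping events and positions in $\sigma$ with some care. A substantially shorter route, if one is willing to invoke classical RSK symmetry, is to apply the previous theorem to the reversed permutation $\sigma^{R}=(\sigma_n,\sigma_{n-1},\ldots,\sigma_1)$: increasing subsequences of $\sigma^R$ biject with decreasing subsequences of $\sigma$, so $L(\sigma^R)=D(\sigma)$, and by a theorem of Sch\"utzenberger the shape of $P(\sigma^R)$ is the transpose of the shape of $P(\sigma)$. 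The number of rows of $P(\sigma)$ therefore equals the number of columns of $P(\sigma^R)$, which by the previous theorem equals $L(\sigma^R)=D(\sigma)$.
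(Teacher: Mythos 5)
Your ``substantially shorter route'' is exactly the paper's proof: the paper disposes of this theorem in one sentence by applying the previous theorem to the reversed permutation $\sigma^r$, using the fact that $P(\sigma^r)$ has the transposed shape of $P(\sigma)$, so the number of rows of $P(\sigma)$ equals the number of columns of $P(\sigma^r)$, which equals $L(\sigma^r)=D(\sigma)$. (One small attribution point: the statement that $P(\sigma^r)$ is the transpose of $P(\sigma)$ is Schensted's theorem; Sch\"utzenberger's evacuation is what is needed to describe the $Q$-tableau of $\sigma^r$, which is irrelevant here.) Your primary route via ``dual basic subsequences'' is not wrong in spirit, but the dual linking lemma is harder than your sketch suggests, and you rightly flag this. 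The specific difficulty: a value $x$ can arrive at cell $(i,1)$ after being bumped from cell $(i-1,j)$ with $j>1$ (e.g.\ for $\sigma=(10,1,5,3)$ the value $5$ reaches $(2,1)$ from $(1,2)$), so the justification ``there exists $y>x$ previously resident at $(i-1,1)$'' does not follow from the bumping event itself; moreover the relevant ``$\sigma$-index'' of a bumped value is the time it originally entered the tableau, not the time it was bumped, so the chronological bookkeeping needed to extract a genuine decreasing subsequence of $\sigma$ requires a separate argument (essentially the row-bumping lemma). Since that machinery is not developed in the paper, the reversal argument is the one to keep; as stated there it is complete and matches the paper.
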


\noindent This result follows by considering the RSK image of the reversed permutation $\sigma^r$. 
Applying RSK to $\sigma^r$ produces the transpose of $P(\sigma)$, interchanging rows and columns and relating LDS length in $\sigma$ to the number of rows in $P(\sigma)$.

These relationships show how the RSK correspondence connects the structure of subsequences in a permutation to the shape of the associated Young tableaux.

\section{Erd\H{o}s--Szekeres Theorem}\label{sec:erdos_szekeres}

Erd\H{o}s and Szekeres first proved a result concerning long increasing or decreasing subsequences in permutations. Their theorem \citep{erdos1935combinatorial} is an early contribution to extremal combinatorics. It formed a basis for later studies, including the Ulam–Hammersley problem, which considers the expected length of the longest increasing subsequence in a random permutation. It also influenced later asymptotic results by Logan–Shepp and Vershik–Kerov.

\begin{thm}[\cite{erdos1935combinatorial}]
Let $\sigma \in S_n$ with $n > r \cdot s$ and $r, s \in \mathbb{N}$. Then either the length of the longest increasing subsequence $L(\sigma)$ is greater than $r$, or the length of the longest decreasing subsequence $D(\sigma)$ is greater than $s$.
\end{thm}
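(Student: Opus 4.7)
The plan is to argue by contradiction using the pigeonhole principle. Suppose $L(\sigma) \le r$ and $D(\sigma) \le s$. To each index $i \in \{1,\ldots,n\}$ I would associate the pair $(a_i, b_i)$, where $a_i$ is the length of the longest increasing subsequence of $\sigma$ \emph{ending} at position $i$, and $b_i$ is the length of the longest decreasing subsequence ending at position $i$. Under the contradiction hypothesis, every such pair lies in $\{1,\ldots,r\} \times \{1,\ldots,s\}$, a set of size $rs < n$.

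The crux of the argument is to verify that the map $i \mapsto (a_i, b_i)$ is injective. For any two indices $i < j$, distinctness of $\sigma_i$ and $\sigma_j$ forces one of two cases: if $\sigma_i < \sigma_j$, then appending $\sigma_j$ to an optimal increasing subsequence ending at $i$ gives one ending at $j$ of strictly greater length, so $a_j > a_i$; if $\sigma_i > \sigma_j$, the symmetric argument yields $b_j > b_i$. In either case $(a_i, b_i) \ne (a_j, b_j)$, so the $n$ pairs are all distinct.

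Combining injectivity with the counting bound yields $n$ distinct pairs inside a set of cardinality $rs < n$, the desired contradiction. The injectivity step is the only genuine obstacle, and even there the case split is routine; the rest is a single invocation of pigeonhole.

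For readers already comfortable with the machinery of Section~\ref{sec:RSK}, I would also note a one-line alternative proof: the common shape $\lambda$ of $P(\sigma)$ and $Q(\sigma)$ has $L(\sigma)$ columns and $D(\sigma)$ rows, so $n = |\lambda| \le L(\sigma)\cdot D(\sigma)$. The assumption $L(\sigma) \le r$ and $D(\sigma) \le s$ would then give $n \le rs$, contradicting $n > rs$. This RSK-based shortcut is essentially free given the preceding section, and it highlights why Erd\H{o}s--Szekeres fits naturally into the Young-tableau framework that drives the rest of the paper.
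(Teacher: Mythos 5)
Your proposal is correct, and your primary argument takes a genuinely different route from the paper's. The paper proves the theorem via the RSK correspondence: since $L(\sigma)$ is the number of columns and $D(\sigma)$ the number of rows of the common shape $\lambda$ of $P(\sigma)$ and $Q(\sigma)$, the assumption $L(\sigma)\le r$ and $D(\sigma)\le s$ would force all $n$ cells of $\lambda$ into an $s\times r$ rectangle, which is impossible when $n > rs$ --- this is exactly the ``one-line alternative'' you mention at the end. Your main argument, by contrast, is the classical Seidenberg-style pigeonhole proof: assign to each index $i$ the pair $(a_i,b_i)$ of longest increasing and decreasing subsequence lengths \emph{ending} at $i$, and check injectivity of $i\mapsto(a_i,b_i)$ by the two-case comparison of $\sigma_i$ and $\sigma_j$. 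Your injectivity step is correctly carried out (the strict monotonicity $a_j>a_i$ or $b_j>b_i$ follows from appending $\sigma_j$ to an optimal subsequence ending at $i$), and the counting conclusion is immediate. What each approach buys: your elementary argument is entirely self-contained and requires none of the machinery of Section~\ref{sec:RSK}, making the theorem independent of the correctness of the RSK bijection and of Schensted's theorem; the paper's RSK proof is shorter \emph{given} that machinery and fits the expository arc of the paper, which builds everything on Young tableaux. Both are complete and valid.
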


\begin{proof}
Suppose, for contradiction, that $\sigma$ is a permutation of $n$ elements with both 
$L(\sigma) \leq r$ and $D(\sigma) \leq s$. By the RSK correspondence, $L(\sigma)$ equals the number of columns and $D(\sigma)$ equals the number of rows in the corresponding Standard Young Tableau (SYT). Thus, the SYT of $\sigma$ fits into an $s \times r$ rectangular shape:

\[
\begin{array}{|c|c|c|c|}
\hline
a_{11} & a_{12} & \cdots & a_{1r} \\
\hline
a_{21} & a_{22} & \cdots & a_{2r} \\
\hline
\vdots & \vdots & \ddots & \vdots \\
\hline
a_{s1} & a_{s2} & \cdots & a_{sr} \\
\hline
\end{array}
\]

Since $n > r \cdot s$, there are at least $r \cdot s + 1$ elements to place. The pigeonhole principle then forces one more element into this diagram, exceeding its $s \times r$ capacity. This would require increasing the number of columns (making $L(\sigma) > r$) or the number of rows (making $D(\sigma) > s$), contradicting the initial assumption.

Therefore, it must be that either $L(\sigma) > r$ or $D(\sigma) > s$.
\end{proof}

\section{The Hook Length Formula}\label{sec:hook_length}

The Hook Length Formula is a celebrated result in combinatorics that provides a simple product formula for the number of Standard Young Tableaux (SYTs) of a given shape. Its importance for the LIS problem stems directly from the RSK correspondence.

Recall that RSK maps the $n!$ permutations in $S_n$ bijectively to pairs of SYTs of the same shape $\lambda \vdash n$. This means the number of permutations that correspond to a particular shape $\lambda$ is precisely $(f^\lambda)^2$, where $f^\lambda$ is the number of SYTs of that shape. Since the LIS length of a permutation is the length of the first row of its associated tableau, the probability of a random permutation having an LIS of length $k$ depends on the sum of $(f^\lambda)^2$ over all shapes $\lambda$ whose first row has length $k$. The Hook Length Formula is the essential tool that unlocks this calculation by giving us an efficient way to compute $f^\lambda$.

% \section{The Hook Length Formula}\label{sec:hook_length}

% The Hook Length Formula is a combinatorial result that provides a remarkably simple product formula for the number of Standard Young Tableaux (SYTs) of a given shape. Its significance extends to the analysis of permutations through a deep connection established by the Robinson-Schensted-Knuth (RSK) correspondence. This correspondence provides a bijection between permutations and pairs of SYTs of the same shape.

A key insight from the RSK correspondence is that the shape of the resulting Young tableaux reveals crucial information about the permutation's structure. Specifically, the length of the first row of the tableau is equal to the length of the Longest Increasing Subsequence (LIS) of the permutation, while the length of the first column corresponds to the length of the Longest Decreasing Subsequence (LDS). Consequently, the Hook Length Formula becomes an essential tool for enumerating permutations with prescribed LIS and LDS lengths, providing a powerful link between algebraic combinatorics and the analysis of permutation patterns.

\subsection{Preliminaries}

Let $f^\lambda$ be the number of SYTs of shape $\lambda \vdash n$, where $\lambda \vdash n$ denotes that $\lambda$ is a partition 
of $n$:
\[
\lambda = (\lambda_1, \lambda_2, \ldots, \lambda_k),
\]
with $\lambda_1 \geq \lambda_2 \geq \cdots \geq \lambda_k > 0$ and $\sum_{i=1}^k \lambda_i = n$.

Under the RSK correspondence, each permutation of $n$ corresponds uniquely to a pair $(P,Q)$ of SYTs of the same shape $\lambda$. Thus, $(f^\lambda)^2$ counts the number of permutations that give rise to shape $\lambda$. Summing over all shapes $\lambda$ that are partitions of $n$, we have:
\begin{equation}
\label{rsk_corresp}
    \sum_{\lambda \vdash n} (f^\lambda)^2 = n!.
\end{equation}

Early work of \citet{macmahon1916combinatory} derived hook–type product
expressions for several special shapes, but a general closed form for
$f^{\lambda}$ was not yet known.  
The full \emph{hook–length formula} was conjectured by J.~S.~Frame in his
1951 thesis and first proved in
\citet{frame1954hook}.

% To determine $f^\lambda$, \citet{macmahon1916combinatory} provided an initial formula. 
% Later, \citet{frame1954hook} introduced the Hook Length Formula, expressing $f^\lambda$ in terms of the hook lengths of the diagram of $\lambda$:
\[
    f^\lambda = \frac{n!}{\prod_{u \in \lambda} h(u)},
\]
where $h(u)$ is the hook length of the cell $u$.

\begin{defin}[Hook and Hook Length]
For a cell $u$ in the Young diagram of $\lambda$, the \emph{hook} of $u$ consists of:
\begin{enumerate}
    \item All cells strictly to the right of $u$ in the same row,
    \item All cells strictly below $u$ in the same column,
    \item The cell $u$ itself.
\end{enumerate}
The \emph{hook length} $h(u)$ of $u$ is the number of cells in this hook.
\end{defin}

\begin{defin}[Walk Types]
These concepts are often used in combinatorial proofs of the Hook Length Formula:
\begin{itemize}
\item A \emph{hook step} moves from one cell to another cell within its hook.
\item A \emph{hook walk} is a sequence of hook steps.
\item A \emph{lattice walk} is a path that moves only to the right or downward to adjacent cells.
\item The \emph{co-hook} of a cell $u$ consists of all cells in the same row to the left of $u$ and all cells in the same column above $u$.
\end{itemize}
These definitions can simplify combinatorial arguments and probabilistic proofs of the Hook Length Formula.
\end{defin}

\subsection*{Example}
Consider $\lambda = (3,2,2)$. The diagram below shows the hook lengths of each cell. The hook of the second cell in the top row (shown in orange) includes the cell itself, one cell to the right, and two cells below, giving it a hook length of 4:

\[
\begin{ytableau} 
5 & *(orange!40)4 & 1 \\ 
3 & 2 \\ 
2 & 1 
\end{ytableau}
\quad \to \quad
\begin{ytableau} 
5 & *(orange!40)4 & *(orange!40)1 \\ 
3 & *(orange!40)2 \\ 
2 & *(orange!40)1 
\end{ytableau}
\]

By determining each cell's hook length, one can compute $f^\lambda$ and count the number of SYTs of shape $\lambda$.

The Hook Length Formula reduces the counting of SYTs to a product of local parameters (the hook lengths):

\begin{thm}[Hook Length Formula \cite{frame1954hook}]
\label{hook_length_formula}
Let $\lambda \vdash n$ and consider its Young diagram. For each cell $u \in \lambda$, let $h(u)$ be the hook length of $u$. Then:
\[
    f^\lambda = \frac{n!}{\prod_{u \in \lambda} h(u)} = \frac{n!}{H(\lambda)},
\]
where $H(\lambda) = \prod_{u \in \lambda} h(u)$.
\end{thm}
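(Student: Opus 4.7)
The plan is to prove the formula by induction on $n$, using the probabilistic \emph{hook walk} argument of Greene, Nijenhuis, and Wilf, which fits naturally with the hook-walk and co-hook vocabulary already introduced. Write $F(\lambda) := n! / H(\lambda)$; the goal is to show $f^{\lambda} = F(\lambda)$. The base case $n=1$ is immediate. For the inductive step, note that in any SYT of shape $\lambda$, the entry $n$ must occupy a \emph{corner} (a cell with no neighbor to the right or below), and deleting that cell yields an SYT of shape $\lambda \setminus c$. Summing over corners gives the fundamental recursion
\[
  f^{\lambda} \;=\; \sum_{c \text{ corner of } \lambda} f^{\lambda \setminus c}.
\]
Applying the inductive hypothesis on the right and dividing by $F(\lambda)$, the formula reduces to the purely numerical identity
\[
  \sum_{c \text{ corner of } \lambda} p_c \;=\; 1,
  \qquad \text{where}\quad
  p_c \;:=\; \frac{H(\lambda)}{n\,H(\lambda \setminus c)} .
\]

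The strategy is then to realize $p_c$ as the hitting probability of a corner $c$ under a concrete random process. Pick a cell $u_0 \in \lambda$ uniformly at random (contributing the factor $1/n$), and from each $u_i$ jump to a cell $u_{i+1}$ chosen uniformly from the hook of $u_i$ with $u_i$ itself removed. Continue until reaching a corner. Because this process must terminate somewhere, the hitting probabilities over all corners automatically sum to $1$; establishing that the hitting probability of corner $c$ equals $p_c$ therefore completes the induction.

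The main obstacle — and the genuine combinatorial content — is computing that hitting probability. The cleanest route is to prove a stronger intermediate lemma: conditional on the row-projection of the walk being a set $A = \{a_1 < \cdots < a_r\}$ with $a_r$ equal to the row of $c$, and its column-projection being $B = \{b_1 < \cdots < b_s\}$ with $b_s$ equal to the column of $c$, the conditional probability of this trajectory has a clean product expression in terms of co-hook lengths at the ``elbows'' of the walk. Summing that product over all admissible $A, B$ telescopes via the identity $\prod_k (1 + 1/(h_k - 1)) = \prod_k h_k/(h_k - 1)$ and produces precisely the ratio $H(\lambda)/\bigl(n\,H(\lambda\setminus c)\bigr)$. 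The key structural fact driving the cancellation is that upon removing the corner $c$, the hook length decreases by exactly $1$ for each cell strictly northwest of $c$ in the same row or column, and is unchanged elsewhere; this is the combinatorial bookkeeping that must be handled carefully.

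Once the hitting-probability lemma is in place, the rest is formal: the probabilities $p_c$ sum to one by construction, the recursion on $f^{\lambda}$ matches the recursion on $F(\lambda)$, and the induction closes. I would defer the detailed verification of the product formula for path probabilities to the original paper of Greene, Nijenhuis, and Wilf, flagging it as the one technical step that the reader should consult for a full proof.
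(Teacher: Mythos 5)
Your proposal is correct and follows essentially the same route as the paper: both use the Greene--Nijenhuis--Wilf probabilistic hook-walk argument, reducing the formula by induction to the identity $\sum_{c} H(\lambda)/\bigl(n\,H(\lambda\setminus c)\bigr)=1$ and realizing each summand as the probability that a hook walk (uniform start, uniform step among the $h(w)-1$ other cells of the hook) terminates at the corner $c$. The paper packages the hitting-probability computation as a lattice-path sum with the rectangle/co-hook labeling and the expansion of $\prod_i(1+1/x_i)\prod_j(1+1/y_j)$, which is the same telescoping product you describe conditioned on the row and column projections of the walk; both accounts defer the remaining verification to the original literature.
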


\subsubsection{Example}
For $\lambda = (3,2,2)$ with $n=7$, computing the hook lengths and taking their product 
gives $H(\lambda) = 5 \cdot 4 \cdot 1 \cdot 3 \cdot 2 \cdot 2 \cdot 1$. Thus:
\[
f^{(3,2,2)} = \frac{7!}{5 \cdot 4 \cdot 1 \cdot 3 \cdot 2 \cdot 2 \cdot 1} = 21.
\]

For $\lambda = (3,2)$ with $n=5$:
\[
f^{(3,2)} = \frac{5!}{4 \cdot 3 \cdot 2 \cdot 1 \cdot 1} = 5.
\]

The five SYTs of shape $(3,2)$ are:

\begin{center}
\begin{ytableau} 1 & 2 & 3 \\ 4 & 5 \end{ytableau}
\quad
\begin{ytableau} 1 & 3 & 5 \\ 2 & 4 \end{ytableau}
\quad
\begin{ytableau} 1 & 2 & 4 \\ 3 & 5 \end{ytableau}
\quad
\begin{ytableau} 1 & 3 & 4 \\ 2 & 5 \end{ytableau}
\quad
\begin{ytableau} 1 & 2 & 5 \\ 3 & 4 \end{ytableau}
\end{center}

This direct enumeration agrees with the formula's result.
\begin{proof}[Proof Outline of Theorem~\ref{hook_length_formula}]
\citet{greene1982probabilistic} give a probabilistic proof of the Hook Length
Formula based on \emph{hook walks}.  We follow their outline, incorporating the
exposition of \citet{shiyue2019hooklength} (see pp.\,3–4 of the hand‑out) for
the labeling scheme illustrated in Figure~\ref{fig:hook_prob_matrix}.

\smallskip
\textbf{Step 1: Induction set–up.}
In any SYT of size $n$, the entry $n$ must occupy a \emph{corner}
(a cell whose hook length is $1$).  Removing this corner yields an SYT
of shape $\lambda-v$ with $n-1$ boxes, so
\[
  f^\lambda \;=\; \sum_{v\text{ corner of }\lambda} f^{\lambda-v}.
\]
Hence, to prove the Hook Length Formula by induction on $n$, it suffices to
show
\begin{equation}\label{hookinduction-2}
  \frac{n!}{H(\lambda)}
  \;=\;
  \sum_{v\text{ corner of }\lambda}\frac{(n-1)!}{H(\lambda-v)},
\end{equation}
or, equivalently,
\[
  1
  \;=\;\sum_{v\text{ corner of }\lambda}
  \frac{H(\lambda)}{n\,H(\lambda-v)}.
\]

\smallskip
\textbf{Step 2: Hook–walk probabilities.}
Interpret the summands in~\eqref{hookinduction-2} as probabilities.
Fix a corner $v$.  For any starting cell $u$, let $P(u,v)$ be the
probability that a \emph{hook walk} beginning at $u$ terminates at $v$:
at every intermediate cell $w$ choose uniformly among the $h(w)-1$
other cells in its hook.  A typical hook walk is shown in
Figure~\ref{fig:hook_walk_path}.

\begin{figure}[ht]
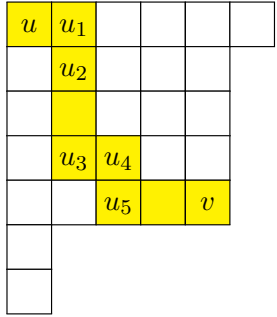

    \centering
    \begin{ytableau}
    *(yellow) u & *(yellow) u_1 & *(white) & *(white) & *(white) & *(white) \\
    *(white) & *(yellow) u_2 & *(white) & *(white) & *(white) \\
    *(white) & *(yellow) & *(white) & *(white) & *(white) \\
    *(white) & *(yellow) u_3 & *(yellow) u_4 & *(white) & *(white) \\
    *(white) & *(white) & *(yellow) u_5 & *(yellow) & *(yellow) v\\
    *(white) \\
    *(white)
    \end{ytableau}
    \caption{A hook walk from cell $u$ to the corner cell $v$.}
    \label{fig:hook_walk_path}
\end{figure}

\smallskip
\textbf{Step 3: The rectangle labeling.}
All walks that end at $v$ stay inside the rectangle with north‑west
corner $u$ and south‑east corner $v$.  Whenever four cells form the
corners of a $2\times2$ rectangle (Figure~\ref{fig:hook_length_rect}),
their hook lengths satisfy
$h(a)+h(d)=h(b)+h(c)$, hence
$(h(a)-1)+(h(d)-1)=(h(b)-1)+(h(c)-1)$.

%----  FIGURE 2  -------------------------------------------------------------
\begin{figure}[ht]
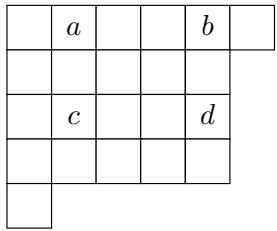

    \centering
    \begin{ytableau}
    *(white) & *(white) a & *(white) & *(white) & *(white) b & *(white) \\
    *(white) & *(white) & *(white) & *(white) & *(white) \\
    *(white) & *(white) c & *(white) & *(white) & *(white) d\\
    *(white) & *(white) & *(white) & *(white) & *(white) \\
    *(white)
    \end{ytableau}
    \caption{A $2\times2$ rectangle with $h(a)+h(d)=h(b)+h(c)$.}
    \label{fig:hook_length_rect}
\end{figure}

For any cell $w$ in a Young diagram, we define its $\operatorname{arm}(w)=a(w)$ as the number of cells to its right in the same row, and its $\operatorname{leg}(w)=b(w)$ as the number of cells below it in the same column. The hook length is then $h(w)=a(w)+b(w)+1$.

\smallskip
\textbf{Boundary notation.}
Fix a corner cell $v$ and look at its \emph{co-hook}, i.e.\ the cells in
the same row \emph{to the left} of $v$ and the cells in the same column
\emph{above} $v$ (excluding $v$ itself).

\noindent Suppose there are

\textbullet\; $k$ cells to the left of $v$; list them left–to–right as
$w_1,\dots,w_k$ and set
\[
  x_i \;=\; h(w_i)-1 \;=\; a(w_i)+b(w_i),
  \qquad 1\le i\le k .
\]

\textbullet\; $\ell$ cells above $v$; list them top–to–bottom as
$w'_1,\dots,w'_\ell$ and set
\[
  y_j \;=\; h(w'_j)-1 \;=\; a(w'_j)+b(w'_j),
  \qquad 1\le j\le \ell .
\]

Thus each boundary cell is coded by the quantity $x_i$ or $y_j$, equal to
its hook length minus 1.

% \textbf{Boundary notation.}
% Fix a corner cell \(v\) and look at its \emph{co-hook}, i.e. the cells in
% the same row \emph{to the left} of \(v\) and the cells in the same column
% \emph{above} \(v\) (excluding \(v\) itself).  Suppose there are  

% * \(k\) cells to the left of \(v\); list them left–to–right as
%   \(w_1,\dots,w_k\) and set  
%   \[
%     x_i \;=\; h(w_i)-1 \;=\; a(w_i)+b(w_i),
%     \qquad 1\le i\le k .
%   \]

% * \(\ell\) cells above \(v\); list them top–to–bottom as
%   \(w'_1,\dots,w'_\ell\) and set  
%   \[
%     y_j \;=\; h(w'_j)-1 \;=\; a(w'_j)+b(w'_j),
%     \qquad 1\le j\le \ell .
%   \]

% Thus each boundary cell is coded by the quantity \(x_i\) or \(y_j\), equal to
% its hook length minus 1.

\smallskip
\textbf{Label assignment.}
Label every cell $w$ by the reciprocal of the number of \emph{available
moves} from that cell:
\[
   \frac{1}{h(w)-1}
   \;=\;
   \frac{1}{a(w)+b(w)} .
\]
On the boundary this means the labels are exactly
$\tfrac{1}{x_i}$ and $\tfrac{1}{y_j}$; interior labels are then forced by
the rectangle rule.  Figure~\ref{fig:hook_prob_matrix} shows the general
pattern and a concrete numerical example.

% Write $a(w)=\text{arm}(w)$ and $b(w)=\text{leg}(w)$; then
% $h(w)=a(w)+b(w)+1$.  For boundary cells set $x_i=a(w_i)$ and
% $y_j=b(w_j)$.  Assign each cell the reciprocal of the number of
% available moves:
% $\frac{1}{h(w)-1}=\frac{1}{a(w)+b(w)}$.  The resulting array of labels
% is shown in Figure~\ref{fig:hook_prob_matrix}; interior labels are
% forced by the rectangle rule.

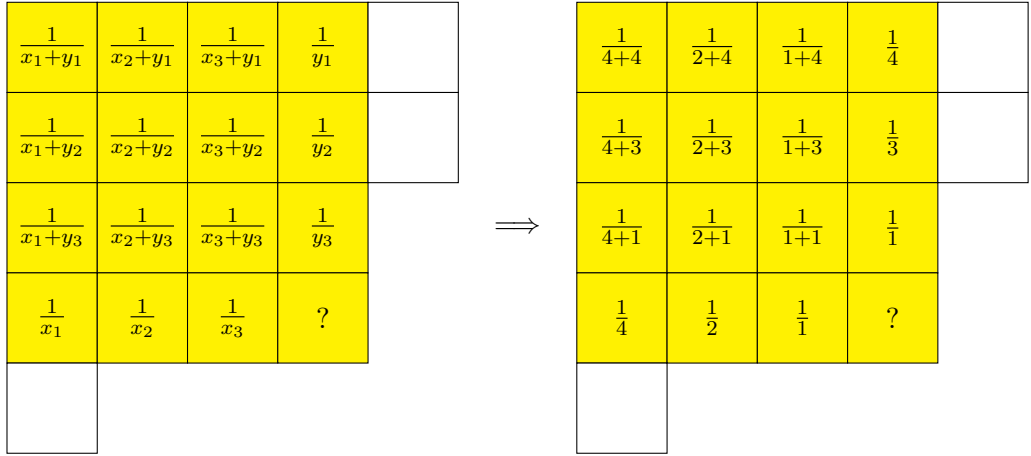
\begin{figure}[ht]
\centering
\begin{tikzpicture}
% First matrix
\matrix [matrix of math nodes,
        nodes={minimum size=1.2cm, outer sep=0pt, fill=yellow, draw, anchor=center},
        row sep=-\pgflinewidth, column sep=-\pgflinewidth] (m1) {
\tfrac{1}{x_1+y_1} & \tfrac{1}{x_2+y_1} & \tfrac{1}{x_3+y_1} & \tfrac{1}{y_1} & |[fill=white]| \\
\tfrac{1}{x_1+y_2} & \tfrac{1}{x_2+y_2} & \tfrac{1}{x_3+y_2} & \tfrac{1}{y_2} & |[fill=white]| \\
\tfrac{1}{x_1+y_3} & \tfrac{1}{x_2+y_3} & \tfrac{1}{x_3+y_3} & \tfrac{1}{y_3} \\
\tfrac{1}{x_1} & \tfrac{1}{x_2} & \tfrac{1}{x_3} & ? \\
|[fill=white]| \\
};
\node[right=0.2cm of m1] (arrow) {$\Longrightarrow$};
% Second matrix
\matrix [matrix of math nodes,
        nodes={minimum size=1.2cm, outer sep=0pt, fill=yellow, draw, anchor=center},
        row sep=-\pgflinewidth, column sep=-\pgflinewidth, right=0.2cm of arrow] (m2) {
\tfrac{1}{4+4} & \tfrac{1}{2+4} & \tfrac{1}{1+4} & \tfrac{1}{4} & |[fill=white]| \\
\tfrac{1}{4+3} & \tfrac{1}{2+3} & \tfrac{1}{1+3} & \tfrac{1}{3} & |[fill=white]| \\
\tfrac{1}{4+1} & \tfrac{1}{2+1} & \tfrac{1}{1+1} & \tfrac{1}{1} \\
\tfrac{1}{4} & \tfrac{1}{2} & \tfrac{1}{1} & ? \\
|[fill=white]| \\
};
\end{tikzpicture}
\caption{The cell labeling scheme for the probabilistic hook-walk proof labels each cell $w$ by the reciprocal of its available moves, $1/(h(w)-1)$. The left panel shows the general algebraic pattern determined by the co-hook of a corner cell, while the right provides a concrete numerical example with boundary values $x=(4,2,1)$ and $y=(4,3,1)$ derived from a specific Young diagram.}
\label{fig:hook_prob_matrix}
\end{figure}

\smallskip
\textbf{Step 4: Lattice paths.}
Restrict temporarily to lattice paths that move only one square south
or east.  If $w(P)$ denotes the product of labels along a path~$P$ from
$u$ to $v$, then (by induction on $k+\ell$)
\[
  \sum_{P:u\to v} w(P)
  \;=\;
  \frac{1}{x_1x_2\cdots x_k\,y_1y_2\cdots y_\ell}.
\]
For $k=\ell=1$ this is illustrated in
Figure~\ref{fig:lattice_walk_2x2}.

\begin{figure}[ht]
\centering
\begin{tikzpicture}
\matrix [matrix of math nodes,
         nodes={minimum size=1.2cm, outer sep=0pt, fill=white, draw, anchor=center},
         row sep=-\pgflinewidth, column sep=-\pgflinewidth] {
\scriptstyle \tfrac{1}{x_1+y_1} & \scriptstyle \tfrac{1}{y_1} & |[fill=white]| \\
\scriptstyle \tfrac{1}{x_1} & \scriptstyle ? \\
};
\end{tikzpicture}
\caption{Sum over the two lattice paths equals
$\frac{1}{x_1y_1}$.}
\label{fig:lattice_walk_2x2}
\end{figure}
%---------------------------------------------------------------------------

\smallskip
\textbf{Step 5: From lattice paths to hook walks.}
A hook walk may skip rows or columns.  Expanding
\[
  \prod_{i=1}^{k}\Bigl(1+\tfrac{1}{x_i}\Bigr)
  \prod_{j=1}^{\ell}\Bigl(1+\tfrac{1}{y_j}\Bigr)
\]
selects which rows/columns are skipped; every monomial is the weight of
a hook walk that starts at the north‑west corner of the corresponding
sub‑rectangle and ends at $v$.  Hence
\[
  \sum_{u} P(u,v)
  \;=\;
  \prod_{t\in\operatorname{cohook}(v)}
     \Bigl(1+\tfrac{1}{h(t)-1}\Bigr)
  \;=\;
  \frac{H(\lambda)}{H(\lambda-v)}.
\]

\smallskip
\textbf{Step 6: Summing over all corners.}
Since every hook walk terminates at some corner,
$\sum_{v}P(u,v)=1$ for each $u$.  Summing first over $u$ and then over
$v$,
\[
  n \;=\;
  \sum_{u\in\lambda}\sum_{v}P(u,v)
      \;=\;
  \sum_{v}\frac{H(\lambda)}{H(\lambda-v)}.
\]
Multiplying by $(n-1)!$ and rearranging yields
\[
  \frac{n!}{H(\lambda)}
    \;=\;
  \sum_{v\text{ corner of }\lambda}
     \frac{(n-1)!}{H(\lambda-v)},
\]
which is exactly the inductive identity~\eqref{hookinduction-2}.
Therefore the Hook Length Formula holds.
\end{proof}

By interpreting hook length ratios as probabilities of certain hook walks, we obtain a probabilistic and combinatorial argument for the Hook Length Formula.

This explicit formula for $f^\lambda$ allows us to analyze the probability distribution on shapes induced by the Plancherel measure, which we consider in the next section.

\section{Plancherel Measure}\label{sec:plancherel}

Plancherel measure provides a probability distribution on the set of all partitions of an integer $n$. Consider a uniformly random permutation $\sigma_n \in S_n$. Applying the Robinson--Schensted--Knuth (RSK) correspondence to $\sigma_n$ produces a pair of standard Young tableaux of the same shape. Denote by $\lambda^{(n)}$ the shape of the standard Young tableau obtained from $\sigma_n$ under RSK. Thus, $\lambda^{(n)}$ is the shape of a partition of $n$ that corresponds to a randomly chosen permutation of $\{1, 2, \ldots, n\}$.

For any fixed partition $\lambda \vdash n$, let $f^\lambda$ denote the number of standard Young tableaux of shape $\lambda$. The Plancherel measure is defined as
\[
    P(\lambda^{(n)} = \lambda) = \frac{(f^\lambda)^2}{n!}.
\]

This measure characterizes the distribution of $\lambda^{(n)}$. In particular, the length of the longest increasing subsequence $L(\sigma_n)$ of a random permutation $\sigma_n \in S_n$ has the same distribution as the length of the first row $\lambda_1^{(n)}$ of the partition $\lambda^{(n)}$.

A more detailed explanation of this equivalence, as well as its implications for the asymptotic behavior of $L(\sigma_n)$, is discussed in the following section.

% \section{Ulam–Hammersley Problem}\label{sec:ulam_hammersley}

% The Ulam–Hammersley problem concerns the asymptotic behavior of the expected length of the Longest Increasing Subsequence (LIS) in a uniformly random permutation. For each $n$, define:
% \begin{equation}
%  \ell_n = \mathbb{E}[L(\sigma_n)] = \frac{1}{n!} \sum_{\sigma \in S_n} L(\sigma),
% \end{equation}
% where $\sigma_n$ is chosen uniformly at random from $S_n$.

% To gain an initial sense of how $\ell_n$ grows, we can compute its exact values for small $n$:
% \[
% \begin{array}{ccccc}
%     \ell_1 = 1.00, & \ell_2 = 1.50, & \ell_3 = 2.00, & \ell_4 \approx 2.42, & \ell_5 \approx 2.79 \\
% \end{array}
% \]
% These values show that $\ell_n$ grows, but at a sublinear rate. It is not immediately obvious what the growth law is. In 1961, Stanislaw Ulam posed the problem of determining the asymptotic behavior of $\ell_n$, a question later taken up by John Hammersley, who established the first rigorous bounds.

\section{Ulam–Hammersley Problem}\label{sec:ulam_hammersley}

The Ulam–Hammersley problem concerns the asymptotic behavior of the expected length of the Longest Increasing Subsequence (LIS) in a uniformly random permutation. For each $n$, define:
\begin{equation}
    \ell_n = \mathbb{E}[L(\sigma_n)] = \frac{1}{n!} \sum_{\sigma \in S_n} L(\sigma),
\end{equation}
where $\sigma_n$ is chosen uniformly at random from $S_n$.

To gain an initial sense of how $\ell_n$ grows, we note its exact values for small $n$:
\[
\begin{array}{ccccc}
    \ell_1 = 1.00, & \ell_2 = 1.50, & \ell_3 = 2.00, & \ell_4 = 2.41, & \ell_5 = 2.79 \\
    \ell_6 = 3.14, & \ell_7 = 3.47, & \ell_8 = 3.77, & \ell_9 = 4.06, & \ell_{10} = 4.33.
\end{array}
\]
These values already suggest that $\ell_n$ grows at a sublinear rate—it increases more slowly than any linear function of $n$. However, it still appears to grow without bound. Ulam first posed the question of determining the asymptotic behavior of $\ell_n$ in 1961, and Hammersley undertook a systematic study in 1970. As a result, the problem is known as the Ulam--Hammersley problem.

\subsection{First Bounds by Hammersley}

\citet{hammersley1972few} established the first nontrivial asymptotic bounds for $\ell_n$. 
He showed that:
\begin{equation}\label{eq:hammersley_bound}
    \frac{\pi}{2} \leq \liminf_{n\to\infty} \frac{\ell_n}{\sqrt{n}} \leq \limsup_{n\to\infty} \frac{\ell_n}{\sqrt{n}} \leq e.
\end{equation}

\subsubsection{Lower Bound}%
\label{subsec:hammersley-lower}

To obtain Hammersley's constant $\pi/2$ we "Poissonize'' the permutation
problem: place a unit–intensity Poisson point process in a square of
\emph{area}~$n$ (side length $\sqrt n$).\par
\textbf{Poisson primer.} A unit–intensity Poisson process has two key
properties:
\begin{enumerate}[label=(\roman*)]
  \item The expected number of points in any region equals its area.
  \item Counts in disjoint regions are independent.
\end{enumerate}
\textit{Example.} Imagine flicking darts uniformly at the square so that, on
average, one dart lands per unit area; the exact dart counts in two
non-overlapping rectangles are independent.  

Now we trace a monotone path that always jumps to the nearest point strictly to
the north–east. Let $Q_0$ be the south–west corner and define recursively $Q_{i+1}=Q(Q_i)$,
where $Q(P)$ is the nearest Poisson point in the cone
$\{(x,y):x>P_x,\;y>P_y\}$.

\paragraph{Distribution of one step.}
Write the increment $Q_iQ_{i+1}$ in polar coordinates
$(R,\varphi)$ with $R>0$ and angle $0<\varphi<\pi/2$.  
Because the Poisson process is isotropic, $R$ and $\varphi$ are independent.  
We derive their laws step by step:

\begin{enumerate}[label=(\alph*)]
  \item \textbf{Survival probability.}  
        The north–east quarter–disk of radius $r$ has area
        $\dfrac{\pi r^{2}}{4}$.  The probability that it contains \emph{no}
        Poisson points is therefore
        \[
          \Pr(R>r)=\exp\!\Bigl(-\tfrac{\pi r^{2}}{4}\Bigr).
        \]

  \item \textbf{Radial density.}  
        Differentiating the survival function gives
        \[
          f_R(r)
          =-\frac{d}{dr}\Pr(R>r)
          =\frac{\pi r}{2}\,e^{-\pi r^{2}/4},
          \qquad r>0.
        \]
        A one-line check shows $\int_{0}^{\infty}f_R(r)\,dr=1$.

  \item \textbf{Mean radius.}
        \[
          \mathbb{E}[R]
          =\int_{0}^{\infty} r\,f_R(r)\,dr
          =\frac{\pi}{2}\int_{0}^{\infty} r^{2}e^{-\pi r^{2}/4}\,dr
          =1.
        \]

  \item \textbf{Angular density and mean.}  
        Rotational symmetry inside the quadrant makes
        \(
          f_\varphi(\theta)=\dfrac{2}{\pi},\;
          0<\theta<\pi/2,
        \)
        whence
        \[
          \mathbb{E}[\cos\varphi]
          =\frac{2}{\pi}\int_{0}^{\pi/2}\cos\theta\,d\theta
          =\frac{2}{\pi}.
        \]
\end{enumerate}

Combining (c) and (d) and using independence,
\[
  \mathbb{E}[R\cos\varphi]
  =\mathbb{E}[R]\,\mathbb{E}[\cos\varphi]
  =1\cdot\frac{2}{\pi}
  =\frac{2}{\pi}.
\]
By symmetry the vertical projection has the same expectation.

\paragraph{From one step to many.}
After $m$ steps the horizontal (and vertical) displacement is the sum of
$m$ independent and identically distributed increments.  By the law of large numbers this is
$m\cdot2/\pi$ in the limit $m\to\infty$.
The walk stops once either coordinate reaches $\sqrt{n}$, so
\[
  m\;\approx\;\frac{\pi}{2}\sqrt{n} \quad (n\to\infty).
\]

\paragraph{Consequences for permutations.}
Each greedy jump adds one element to an increasing subsequence, so the LIS
length satisfies $L_n\ge m$ for \emph{all} realizations.  Hence
\[
  \mathbb{E}[L_n]\;\ge\;\frac{\pi}{2}\sqrt{n},
  \qquad\text{and therefore}\qquad
  \liminf_{n\to\infty}\frac{\mathbb{E}[L_n]}{\sqrt{n}}\;\ge\;\frac{\pi}{2}.
\]
(Variational methods of Logan–Shepp and Vershik–Kerov later raise the constant
to~$2$, but $\pi/2$ remains a valid uniform lower bound.)

\subsubsection{Upper Bound}

We now establish the upper bound of $e$ on the growth rate of $\ell_n$. Our argument is inspired by the approach in \citet{romik2015surprising}.

Consider $X_{n,k}$, the number of increasing subsequences of length $k$ in a random 
permutation $\sigma_n \in S_n$. Since $X_{n,k}$ counts the subsets of size $k$ in increasing order, observe that:
\[
\mathbb{E}[X_{n,k}] = \frac{\binom{n}{k}}{k!}.
\]
The binomial coefficient $\binom{n}{k}$ counts the $k$-subsets of $\{1,2,\ldots,n\}$, and each $k$-subset has a $1/k!$ probability of appearing in increasing order out of the $k!$ possible permutations of that subset.

To bound $P(L(\sigma_n) \geq k)$, we use Markov's inequality:
\[
P(L(\sigma_n) \geq k) = P(X_{n,k} \geq 1) \leq \mathbb{E}[X_{n,k}] = \frac{\binom{n}{k}}{k!}.
\]
To approximate for large $n$ and $k$, we use Stirling's approximation:
\[
k! \sim \sqrt{2\pi k}\left(\frac{k}{e}\right)^k.
\]
As $n$ and $k$ become large, a rough estimate gives:
\[
\frac{\binom{n}{k}}{k!} \approx \frac{n^k}{(k!)^2}.
\]
To show that $L(\sigma_n)$ typically does not exceed $e\sqrt{n}$ by more than a constant factor, set
\[
k = \lfloor (1+\delta)e\sqrt{n} \rfloor
\]
for some small $\delta > 0$. As $n \to \infty$, we have $k \sim (1+\delta)e\sqrt{n}$. Thus:
\[
P(L(\sigma_n) \geq k) \leq \frac{n^k}{(k!)^2}.
\]
Applying Stirling's approximation to $k!$ and using the fact that $k \sim (1+\delta)e\sqrt{n}$:
\[
k! \sim \sqrt{2\pi k}\left(\frac{k}{e}\right)^k \sim \sqrt{2\pi (1+\delta)e\sqrt{n}}\,( (1+\delta)\sqrt{n} )^{(1+\delta)e\sqrt{n}}.
\]
Substituting back:
\[
\frac{n^k}{(k!)^2} \approx \frac{n^{(1+\delta)e\sqrt{n}}}{\left(\sqrt{2\pi (1+\delta)e\sqrt{n}}\,( (1+\delta)\sqrt{n} )^{(1+\delta)e\sqrt{n}}\right)^2}.
\]
After cancelling the common factor $n^{k}$, the denominator still contributes $(1+\delta)^{2k}\!\sqrt{2\pi k}$; this extra exponential factor forces the ratio to decay to zero as $n \to \infty$.

Hence:
\[
P(L(\sigma_n) \geq (1+\delta)e\sqrt{n}) \to 0 \quad \text{as } n \to \infty.
\]
This shows that it is highly unlikely for $L(\sigma_n)$ to be larger than $(1+\delta)e\sqrt{n}$ for 
any fixed $\delta > 0$. Consequently:
\[
\limsup_{n \to \infty} \frac{\ell_n}{\sqrt{n}} \leq e.
\]
Combined with Hammersley's lower bound, this shows:
\[
\frac{\pi}{2} \;\leq\; \liminf_{n\to\infty} \frac{\ell_n}{\sqrt{n}} \;\leq\; \limsup_{n\to\infty} \frac{\ell_n}{\sqrt{n}} \;\leq\; e,
\]
providing both lower and upper constraints on the growth of the expected LIS length.

Hammersley conjectured that the following limit exists:
\[
c = \lim_{n \to \infty} \frac{\mathbb{E}[L(\sigma_n)]}{\sqrt{n}}.
\]
He further conjectured that $c = 2$. This conjecture was subsequently proven correct by 
\citet{logan1977variational} and \citet{vershik1977asymptotics} independently.

\subsection{Confirming Hammersley's Conjecture}

\citet{logan1977variational} and \citet{vershik1977asymptotics} independently proved Hammersley's conjecture, establishing that:
\[
    \lim_{n \to \infty} \frac{\mathbb{E}[L(\sigma_n)]}{\sqrt{n}} = 2.
\]

We follow the exposition in \citet{stanley2005increasing} to outline the key ideas.

\subsubsection{Expected Length of the LIS in Terms of Partitions}

Recall that under the Robinson--Schensted--Knuth (RSK) correspondence, every permutation
$\sigma\in S_n$ is mapped to a pair of standard Young tableaux (SYT) that
share a common shape $\lambda\vdash n$.  Denote by $f^\lambda$ the number
of SYTs of shape $\lambda$.  Because this map is a bijection onto pairs of
tableaux, we have the classical identity
\[
  n!\;=\;\sum_{\lambda\vdash n} (f^\lambda)^2.
\]

The longest increasing subsequence (LIS) length of $\sigma$ equals the first
row length $\lambda_1$ of the associated shape.  Consequently
\[
  \mathbb{E}[L(\sigma_n)]
  \;=\;\frac{1}{n!}\sum_{\lambda\vdash n}\lambda_1\,(f^\lambda)^2.
\]

\paragraph{A dominant partition.}
Let $p(n)$ be the number of partitions of $n$ and set
$M_n:=\max_{\lambda\vdash n}f^\lambda$.  Because the sum of squares of all
$f^\lambda$ equals $n!$, Cauchy--Schwarz yields
\[
  M_n^2\;\le\;\sum_{\lambda\vdash n}(f^\lambda)^2=n!
  \quad\Longrightarrow\quad
  M_n\le\sqrt{n!}.
\]
Conversely, at least one summand reaches the average value:
\[
  M_n^2
  \;\ge\;
  \frac{1}{p(n)}\sum_{\lambda\vdash n}(f^\lambda)^2
  =\frac{n!}{p(n)}
  \quad\Longrightarrow\quad
  M_n\ge\sqrt{\frac{n!}{p(n)}}.
\]

By the Hardy--Ramanujan asymptotic
$p(n)=\exp\!\bigl(O(\sqrt n)\bigr)$\,\cite{HardyRamanujan1918}, the gap
between these bounds is only a sub-exponential factor:
\[
  \sqrt{n!}\,e^{-O(\sqrt n)}
  \;\le\;M_n\;\le\;\sqrt{n!}.
\]

Hence $M_n=\Theta(\sqrt{n!})$; that is, a single dominant partition
$\lambda^{(n)}$ contributes a summand of the same order as the entire sum.
This observation underlies many refinements of the RSK-based analysis of the
expected LIS length.

\paragraph{Key Observation: The Variational Problem.}
With $\ell_n = \mathbb{E}[L(\sigma_n)]$, the character identity gives 
\[ 
\ell_n = \frac{1}{n!}\sum_{\lambda\vdash n}\lambda_1(f^\lambda)^2 \approx \frac{\lambda^{(n)}_1(f^{\lambda^{(n)}})^2}{n!}, 
\]
where $\lambda^{(n)}$ is the partition with maximal $f^\lambda$. We see that the dominant partition controls the average LIS, and since $(f^{\lambda^{(n)}})^2$ is only a sub-exponential factor below $n!$, it follows that $\ell_n \approx \lambda^{(n)}_1$. Thus the growth of $\ell_n$ reduces to understanding the first-row length of $\lambda^{(n)}$. By the Hook-Length Formula, 
\[ 
f^\lambda = \frac{n!}{\prod_{u\in\lambda} h(u)}, 
\]
where $h(u)$ is the hook length of a cell $u$. Maximizing $f^\lambda$ is therefore equivalent to minimizing the product of hook lengths, which transforms the question into a variational problem to find the limit shape of the Young diagram of $\lambda^{(n)}$.

% \begin{tcolorbox}[colframe=black,
%                  title={Key Observation — Variational Problem}]
% Recall that \( \ell_n=\mathbb{E}[L(\sigma_n)] \).  Using the character identity
% \[
%   \ell_n
%   \;=\;
%   \frac{1}{n!}\sum_{\lambda\vdash n}\lambda_1\bigl(f^\lambda\bigr)^2
%   \;\approx\;
%   \frac{\lambda^{(n)}_1\,\bigl(f^{\lambda^{(n)}}\bigr)^2}{n!},
% \]
% we see that the dominant partition \( \lambda^{(n)} \) (the one with
% maximal \( f^\lambda \)) controls the average LIS.  Because
% \( (f^{\lambda^{(n)}})^2 \) is only a sub-exponential factor below \( n! \),
% it follows that
% \[
%   \ell_n \;\approx\; \lambda^{(n)}_1.
% \]
% Thus the growth of \( \ell_n \) reduces to understanding the first-row
% length \( \lambda^{(n)}_1 \).

% \vspace{0.5em}
% \noindent
% By the Hook–Length Formula
% \[
%   f^\lambda
%   \;=\;
%   \frac{n!}{\displaystyle\prod_{u\in\lambda} h(u)},
% \]
% where \( h(u) \) is the hook length of a cell \( u \) in \( \lambda \).
% Maximising \( f^\lambda \) is therefore equivalent to
% minimizing the product of hook lengths, which (via the
% Logan–Shepp/Vershik–Kerov analysis) picks out a specific limit shape for the
% Young diagram of \( \lambda^{(n)} \).
% \end{tcolorbox}

\textbf{Normalization and the Limit Shape.}  
To analyze the limit as \(n \to \infty\), the Young diagram is scaled so that its total area is 1. Assigning each cell a side length of \(1/\sqrt{n}\) results in a continuous shape fitting into a unit square. Under this scaling, the boundary of the Young diagram \(\lambda^{(n)}\) converges to a limit shape described by a curve \(y = \Psi(x)\).

If the limit shape intersects the $x$-axis at \(x=b\), it follows that:
\[
    c := \lim_{n \to \infty} \frac{\mathbb{E}[L(\sigma_n)]}{\sqrt{n}} \geq b.
\]
However, as noted by Logan–Shepp and Vershik–Kerov, it is necessary to solve a variational problem to determine the exact limiting shape. Their results show that the limiting curve intersects the $x$-axis at \(x=2\). This yields:
\[
    \lim_{n \to \infty} \frac{\mathbb{E}[L(\sigma_n)]}{\sqrt{n}} \geq 2,
\]
consistent with Hammersley's conjecture.

\textbf{Logan–Shepp and Vershik–Kerov's Variational Result:}  
\citet{logan1977variational} and \citet{vershik1977asymptotics}, working independently, identified the limiting curve \(y=\Psi(x)\) by formulating and solving a variational problem.

Consider a function \(f\) that parametrizes the limiting shape of the Young diagram. The normalized hook-length at a point \((x,y)\) in the scaled diagram can be expressed as
\[
   f(x) - y + f^{-1}(y) - x,
\]
reflecting the geometric constraints imposed by hook lengths.

They defined the functional
\[
    I(f) \;=\; \iint_A \log\!\bigl(f(x) - y + f^{-1}(y) - x\bigr)\,dx\,dy,
\]
where \(A\) is the region under the scaled diagram, subject to the
area-normalization
\[
    \iint_A dx\,dy \;=\; 1.
\]
Minimizing \(I(f)\) over admissible \(f\) characterizes the limiting shape
\(\Psi(x)\).

\paragraph{How the minimization is solved}
Here is a six-step road map from the "minimize an integral" statement to the
explicit limit curve; a full derivation appears in
\cite[Section\,2]{logan1977variational} and the textbook account
\cite[1.13–1.15]{romik2015surprising}.

\begin{enumerate}[label=\arabic*.]
\item \textbf{Rotate and rescale.}
      Divide both coordinates by $\sqrt n$ and rotate $45^{\circ}$ so the
      boundary becomes a non-increasing, $1$-Lipschitz curve
      $f:[0,1]\!\to\![0,1]$ enclosing \emph{area one}.

\item \textbf{Define the functional.}
      For $(x,y)$ inside the diagram set
      $h_f(x,y)=f(x)-y+f^{-1}(y)-x$ (its \emph{hook length} after scaling).
      The quantity to minimize is
      \[
        I(f)=\iint_{A}\log h_f(x,y)\,dx\,dy,
      \]
      where $A=\{(x,y):0\le y\le f(x)\}$.

\item \textbf{Use a Lagrange multiplier.}
      Fix the area by adding $\alpha(\text{area}-1)$:
      \[
        J(f)=I(f)+\alpha\!\Bigl(\iint_A dx\,dy-1\Bigr).
      \]

\item \textbf{First variation $\Rightarrow$ Euler–Lagrange equation.}
      Perturb $f$ to $f+\varepsilon\eta$ with a smooth bump~$\eta$ and demand
      $\frac{d}{d\varepsilon}J(f+\varepsilon\eta)\bigl|_{\varepsilon=0}=0$
      for \emph{all} $\eta$.  The resulting Euler–Lagrange condition is
      \[
        h_f(x,y)=f(x)-y+f^{-1}(y)-x=C,
        \qquad(x,y)\in A,
      \]
      meaning every hook length in the optimal diagram is the same constant~$C$.

\item \textbf{Find $C$.}
      Integrating $h_f\equiv C$ over the unit-area region forces $C=2$.

\item \textbf{Solve for $f$.}
      On the boundary put $y=f(x)$ and eliminate $f^{-1}$, obtaining a
      first-order ordinary differential equation whose solution is the parametric curve
      \[
        x = y + 2\cos\theta,\qquad
        y = \frac{2}{\pi}\bigl(\sin\theta-\theta\cos\theta\bigr),
        \qquad 0\le \theta \le \pi.
      \]
      Vershik–Kerov derive the same curve via an entropy maximization
      argument \cite[Sec.\,3]{vershik1977asymptotics}.
\end{enumerate}

Figure~\ref{fig:parametric_curve} illustrates this limiting boundary and the
normalized hook length at point \((x,y)\).  Its intersection with the
\(x\)-axis at \((2,0)\) confirms that
\[
    \lim_{n\to\infty} \frac{\mathbb{E}[L(\sigma_n)]}{\sqrt{n}} \;\ge\; 2,
\]
consistent with Hammersley's conjecture.

\begin{figure}[ht]
\centering
\includegraphics[width=0.75\textwidth]{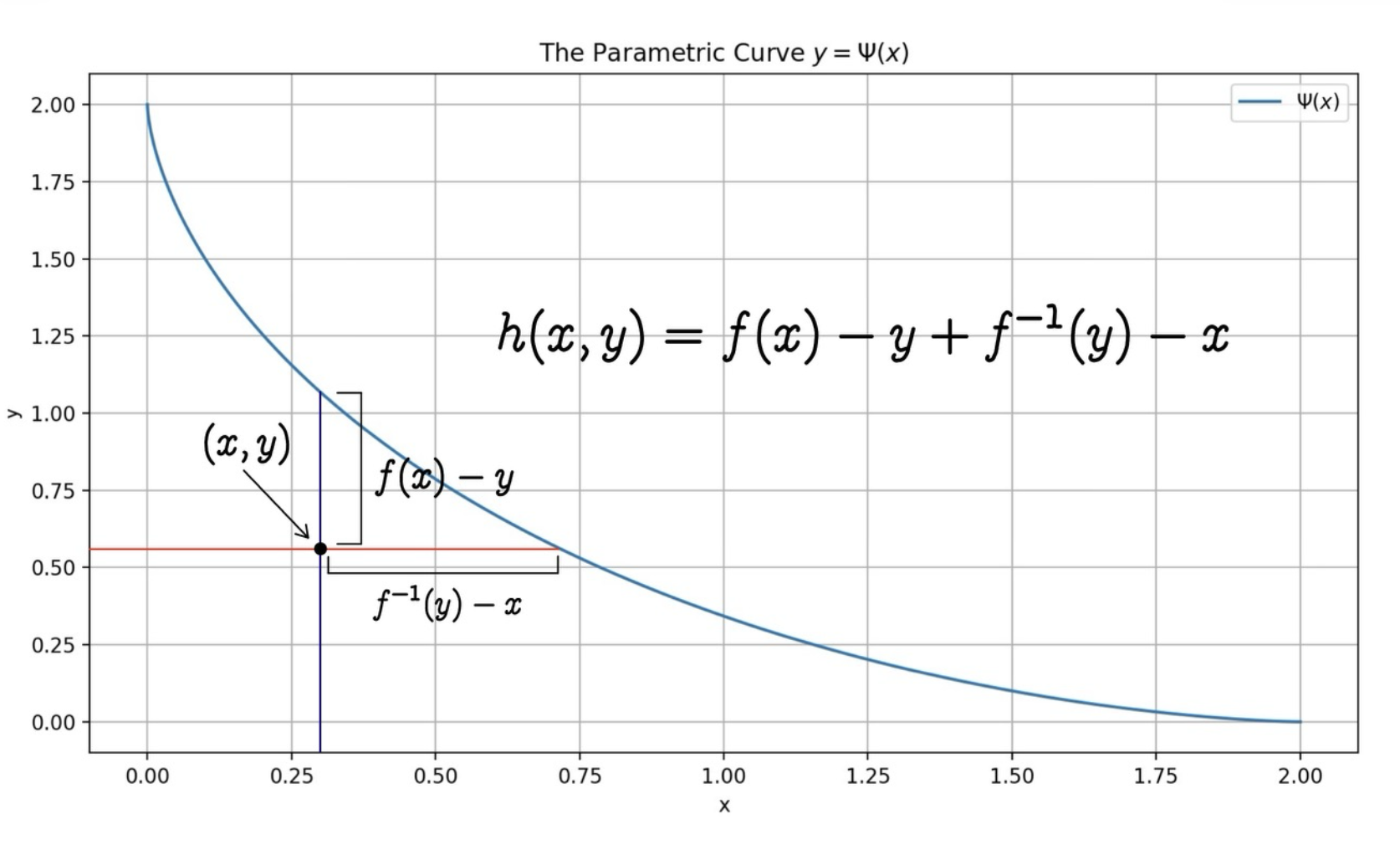}
\caption{The limiting boundary of the Normalized Young Diagram and the Normalized Hook Length. This curve, derived by \citet{logan1977variational} and \citet{vershik1977asymptotics}, illustrates the asymptotic shape whose first row length (scaled) corresponds to the constant 2 in the $2\sqrt{n}$ growth of the LIS.}
\label{fig:parametric_curve}
\end{figure}
This curve intersects the $x$-axis at $x = 2$, suggesting that $c \geq 2$.

The asymptotic form shows \(\mathbb{E}[L(\sigma_n)] \sim 2\sqrt{n}\).

\begin{lemma}[{\cite{logan1977variational}}]
For a uniform random permutation $\sigma_n\in S_n$,
\[
     \liminf_{n\to\infty}\frac{L(\sigma_n)}{\sqrt{n}}\;\ge\;2.
\]
\end{lemma}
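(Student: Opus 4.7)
The plan is to translate the problem via the RSK correspondence into a concentration statement for the Plancherel measure, and then invoke the variational analysis of Logan--Shepp and Vershik--Kerov sketched in the previous subsection. First, by RSK together with the definition of Plancherel measure, $L(\sigma_n)$ has the same law as $\lambda_1^{(n)}$, the first-row length of a Plancherel-distributed partition $\lambda^{(n)}\vdash n$. The lemma, read (as is standard for such a sequence of random variables on distinct probability spaces) as convergence in probability, is therefore equivalent to the assertion that for every fixed $\eps>0$,
\[
  \Pr\!\bigl(\lambda_1^{(n)} \ge (2-\eps)\sqrt{n}\bigr)\;\longrightarrow\;1
  \qquad\text{as }n\to\infty.
\]

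Second, I would bound the complementary probability by a union bound over shapes. Writing $\mathcal{B}_n=\{\lambda\vdash n:\lambda_1<(2-\eps)\sqrt{n}\}$,
\[
  \Pr\!\bigl(\lambda^{(n)}\in\mathcal{B}_n\bigr)
  \;=\;\sum_{\lambda\in\mathcal{B}_n}\frac{(f^\lambda)^2}{n!}
  \;\le\; p(n)\,\max_{\lambda\in\mathcal{B}_n}\frac{(f^\lambda)^2}{n!}.
\]
The Hardy--Ramanujan asymptotic $p(n)=\exp\!\bigl(O(\sqrt n)\bigr)$ is sub-exponential in $n$, so it is enough to produce a constant $c=c(\eps)>0$ for which the maximum on the right is at most $e^{-cn+o(n)}$.

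Third, this is exactly what the variational machinery delivers. After rescaling so that each cell has side $1/\sqrt n$, the Hook Length Formula combined with Stirling rewrites $(f^\lambda)^2/n!$ as an exponential of the form $\exp\!\bigl(-2n\,I(f_\lambda)+o(n)\bigr)$, where $f_\lambda$ is the scaled boundary curve of $\lambda$ and $I$ is (up to an additive constant depending only on the area normalization) the Logan--Shepp--Vershik--Kerov functional. That functional has $\Psi$ as its unique area-normalized minimizer, and $\Psi$ meets the $x$-axis at $x=2$. Since every $\lambda\in\mathcal{B}_n$ has scaled first row strictly below $2-\eps$, the profile $f_\lambda$ is bounded away from $\Psi$ in, say, uniform norm, which forces $I(f_\lambda)\ge I(\Psi)+\delta(\eps)$ for some $\delta(\eps)>0$. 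Substituting this gap into the exponential estimate yields the required bound and completes the argument.

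The hard part is step three: upgrading the pointwise variational strict inequality $I(f_\lambda)>I(\Psi)$ to a \emph{uniform} gap $\delta(\eps)>0$ valid across all of $\mathcal{B}_n$. This demands a compactness/equicontinuity argument applied to the smoothed profiles of partitions subject to the first-row constraint, together with careful bookkeeping of the $o(n)$ error term in the hook-length plus Stirling approximation (cells near the edge of the diagram require separate treatment, and the $f^{-1}$ appearing inside the functional must be controlled). Both ingredients are carried out in detail in \citet{logan1977variational} and \citet{vershik1977asymptotics}; the proposal here emphasizes the conceptual chain RSK $\to$ Plancherel $\to$ variational minimization and treats those technicalities as black-box inputs.
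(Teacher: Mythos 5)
Your argument is correct in outline, but it takes a genuinely different route from the paper. The paper's proof is a two-line deduction: it invokes the Logan--Shepp limit-shape theorem as a black box (the rescaled Plancherel diagram lies in an $\varepsilon$-neighbourhood of $\Psi$ with high probability), notes that $\Psi$ meets the $x$-axis at $x=2$, and reads off $\lambda_1\ge(2-\varepsilon)\sqrt n$. You instead unpack one half of that theorem: reduce to Plancherel measure via RSK, apply a union bound over the $e^{O(\sqrt n)}$ partitions with short first row, and show each such shape carries probability $e^{-cn+o(n)}$ because its profile is uniformly bounded away from the minimizer $\Psi$ of the hook functional, hence pays a positive gap $\delta(\varepsilon)$ in the rate function. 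What your version buys is transparency about \emph{why} the limit-shape theorem holds and exactly where the variational problem enters (the normalization making the rate vanish at $\Psi$, which you correctly flag, is essential for the union bound to close); what the paper's version buys is brevity, at the cost of hiding the entire mechanism inside the citation. You also handle one point more carefully than the paper: since the $\sigma_n$ live on different probability spaces, the $\liminf$ statement should be read in probability, which you make explicit, whereas the paper's ``with probability~1'' is loose. The one step you should not understate is the passage from ``$f_\lambda$ is $\eta(\varepsilon)$-far from $\Psi$ in sup norm'' to the uniform gap $I(f_\lambda)\ge I(\Psi)+\delta(\varepsilon)$: this needs lower semicontinuity of $I$ on the (Arzel\`a--Ascoli-compact) space of $1$-Lipschitz area-one profiles together with strict minimality of $\Psi$, and you are right that this, plus the $o(n)$ bookkeeping in the Stirling/hook-integral approximation, is where the real work of Logan--Shepp and Vershik--Kerov lies. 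Treated as cited inputs, as both you and the paper ultimately do, the proof is sound.
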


\begin{proof}
The Logan--Shepp limit–shape theorem states that the rescaled Young diagram
lies, with probability~$1$, inside any $\varepsilon$–neighbourhood of the
parametric curve displayed above.  Because that curve meets the $x$-axis at
$(2,0)$, the first–row length must eventually satisfy
$\lambda_1\ge (2-\varepsilon)\sqrt n$.  Letting $\varepsilon\downarrow0$
gives the claimed $\liminf$ inequality.
\end{proof}

\vspace{0.5em}

\begin{lemma}[{\cite{vershik1977asymptotics}}]
For the same random permutations,
\[
     \limsup_{n\to\infty}\frac{L(\sigma_n)}{\sqrt{n}}\;\le\;2.
\]
\end{lemma}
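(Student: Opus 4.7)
The plan is to mirror the Logan--Shepp argument of the preceding lemma, using the Vershik--Kerov limit-shape theorem to bound the first-row length of $\lambda^{(n)}$ from above. Since $L(\sigma_n)=\lambda_1^{(n)}$ by RSK and $\lambda_1^{(n)}$ is the horizontal extent of the Young diagram along the $x$-axis, it suffices to control this extent after dividing by $\sqrt n$.

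First I would invoke the Vershik--Kerov limit-shape theorem: with probability $1$, the rescaled Young diagram $\lambda^{(n)}/\sqrt n$ lies, for all sufficiently large $n$, in every $\varepsilon$-neighbourhood of the region bounded by the limit curve $\Psi$ derived in the six-step variational programme. Next, I would observe that the rightmost filled cell of the first row corresponds, after rescaling, to a point of the form $\bigl((\lambda_1^{(n)}-1)/\sqrt n,\,0\bigr)$, which must therefore lie in that $\varepsilon$-neighbourhood. Since $\Psi$ meets the $x$-axis exactly at $(2,0)$, no point on the $x$-axis within distance $\varepsilon$ of the region under $\Psi$ has $x$-coordinate exceeding $2+\varepsilon$. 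Consequently,
\[
\frac{\lambda_1^{(n)}}{\sqrt n}\;\le\;2+\varepsilon+\frac{1}{\sqrt n}
\]
for all large $n$, almost surely. Letting $\varepsilon\downarrow 0$ gives $\limsup_{n\to\infty}L(\sigma_n)/\sqrt n\le 2$ almost surely, and in particular in the sense claimed.

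The main obstacle is not the symbolic manipulation — the argument is formally symmetric to the liminf bound — but rather a subtle boundary issue. The limit-shape theorem is usually stated as uniform convergence of the \emph{profile} (e.g.\ in the Russian rotated coordinates) or as $L^{1}$ convergence of the indicator function of the diagram. To conclude that the extreme point $(\lambda_1^{(n)}/\sqrt n,0)$ cannot drift past $2+\varepsilon$, one needs a version of the theorem that rules out thin horizontal ``spikes'' of the first row extending beyond the support of $\Psi$. Handling this spike control — either by appealing to the stronger uniform form of the Vershik--Kerov theorem or by supplementing with an independent tail estimate such as $P(\lambda_1^{(n)}\ge(2+\varepsilon)\sqrt n)\to 0$ derived from the hook-length formula — is the only non-routine ingredient. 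Granting that control, the rest of the proof is an immediate consequence of the variational characterization that fixes the $x$-intercept of $\Psi$ at exactly $2$.
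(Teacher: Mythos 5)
Your proof takes essentially the same approach as the paper's: invoke the Vershik--Kerov limit-shape theorem to confine the rescaled diagram within an $\varepsilon$-neighbourhood of the limit curve, use the fact that the curve meets the $x$-axis at $(2,0)$ to force $\lambda_1^{(n)}/\sqrt{n}\le 2+\varepsilon$ eventually, and send $\varepsilon\downarrow 0$. Your remark about ruling out thin first-row ``spikes'' identifies a genuine subtlety that the paper's one-line proof passes over silently; the uniform form of the limit-shape statement (or the supplementary tail bound you suggest) is exactly what is implicitly being assumed there.
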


\begin{proof}
Vershik and Kerov showed that the Plancherel‐distributed diagram cannot rise
\emph{above} the limit curve by more than~$\varepsilon$ with probability
tending to~$1$.  Since any point with $x>2+\varepsilon$ lies outside that
tube, the first row must eventually obey
$\lambda_1\le (2+\varepsilon)\sqrt n$.  Sending $\varepsilon\downarrow0$
establishes the $\limsup$ bound.
\end{proof}

Combining these two lemmas shows that the limit exists and equals 2, thus proving Hammersley's conjecture.

\begin{thm}[\cite{logan1977variational}, \cite{vershik1977asymptotics}]
For a uniformly random permutation \(\sigma_n\), the expected length of the longest increasing subsequence satisfies:
 
\[
    \lim_{n \to \infty} \frac{\mathbb{E}[L(\sigma_n)]}{\sqrt{n}} = 2.
\]
This confirms Hammersley's conjecture.
\end{thm}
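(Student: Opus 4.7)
The plan is to read the theorem directly off the two preceding lemmas, with one subtle gap to fill. The Logan--Shepp lemma gives $\liminf_{n\to\infty} L(\sigma_n)/\sqrt{n} \geq 2$ and the Vershik--Kerov lemma gives $\limsup_{n\to\infty} L(\sigma_n)/\sqrt{n} \leq 2$, both almost surely (their proofs rest on limit-shape theorems that hold with probability tending to one). Sandwiching these inequalities immediately yields
\[
  \frac{L(\sigma_n)}{\sqrt{n}} \longrightarrow 2 \qquad \text{almost surely.}
\]
The theorem, however, concerns $\ell_n/\sqrt{n} = \mathbb{E}[L(\sigma_n)/\sqrt{n}]$, so the remaining task is to pass the limit inside the expectation.

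The hard part will be this passage. The trivial bound $L(\sigma_n) \leq n$ only gives $L(\sigma_n)/\sqrt{n} \leq \sqrt{n}$, which is not uniform in $n$, so neither bounded nor dominated convergence applies directly. I would therefore establish uniform integrability of the family $\{L(\sigma_n)/\sqrt{n}\}_{n \geq 1}$ by recycling the tail estimate behind Hammersley's upper bound: by Markov's inequality and Stirling, $\Pr(L(\sigma_n) \geq t\sqrt{n})$ decays super-polynomially in $n$ for every fixed $t > e$. A routine computation with this tail bound shows that $\sup_n \mathbb{E}[(L(\sigma_n)/\sqrt{n})^2]$ is finite, which is more than enough to rule out any escape of mass to infinity.

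Combining almost-sure convergence with this uniform integrability (via the Vitali convergence theorem) promotes the pointwise limit into $L^1$ convergence, and in particular into convergence of expectations:
\[
  \lim_{n \to \infty} \frac{\ell_n}{\sqrt{n}} = 2,
\]
confirming Hammersley's conjecture.
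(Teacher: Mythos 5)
Your proposal follows the same route as the paper: the theorem is obtained by sandwiching the two preceding lemmas (Logan--Shepp for the $\liminf$, Vershik--Kerov for the $\limsup$). The difference is that the paper simply asserts ``combining these two lemmas shows that the limit exists and equals 2,'' silently passing from convergence of the random variable $L(\sigma_n)/\sqrt{n}$ to convergence of its expectation, whereas you correctly identify this passage as a genuine gap and fill it: the trivial bound $L(\sigma_n)\le n$ gives no uniform domination, so you invoke the Markov--Stirling tail estimate $\Pr\bigl(L(\sigma_n)\ge t\sqrt{n}\bigr)\le \binom{n}{\lceil t\sqrt{n}\rceil}/\lceil t\sqrt{n}\rceil!$ (which for $t>e$ decays super-polynomially) to bound $\sup_n \mathbb{E}\bigl[(L(\sigma_n)/\sqrt{n})^2\bigr]$, deduce uniform integrability, and conclude $L^1$ convergence via Vitali. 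That step is standard in the literature (it is how Vershik--Kerov and later expositions such as Romik's actually complete the argument) and is needed for the theorem as stated about $\mathbb{E}[L(\sigma_n)]$; your version is therefore more complete than the paper's. One small caution: since the permutations $\sigma_n$ for different $n$ are not coupled on a common probability space, the lemmas really give convergence in probability rather than almost-sure convergence, but convergence in probability together with uniform integrability still yields convergence in $L^1$, so your argument goes through unchanged.
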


\section{The Limiting Distribution of the LIS}\label{sec:lis_distribution}

The determination that $\mathbb{E}[L_n] \sim 2\sqrt{n}$ is a landmark result, yet it characterizes only the first moment of the LIS length distribution. A complete probabilistic description requires understanding the nature of the fluctuations of $L_n$ around this asymptotic mean. This refined inquiry shifts the focus from the expected value to the limiting law of the centered and scaled random variable. The pivotal work of Baik, Deift, and Johansson resolved this question, demonstrating that the fluctuations occur on a scale of $n^{1/6}$ and converge to the Tracy-Widom distribution, a law first discovered in the realm of random matrix theory.

\subsection{The Baik–Deift–Johansson Theorem}

Baik, Deift, and Johansson proved that, after centering and normalizing by $n^{1/6}$, the fluctuations of the LIS length converge to the \emph{Tracy–Widom distribution}, a probability law that also governs the largest eigenvalue of certain random matrices.

\begin{thm}[{\cite{baik1999distribution}}, 1999]
For a uniformly random permutation \(\sigma_n\in S_n\) and every real \(t\),
\[
    \lim_{n\to\infty}
    \mathbb{P}\!\left(
        \frac{L(\sigma_n)-2\sqrt{n}}{n^{1/6}}\le t
    \right)
    \;=\;
    F_{\mathrm{TW}}(t),
\]
where \(F_{\mathrm{TW}}\) denotes the Tracy--Widom distribution function.
\end{thm}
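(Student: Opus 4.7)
The plan is to follow the original Baik--Deift--Johansson strategy, which proceeds in four main stages: Poissonization, conversion to a Toeplitz determinant, asymptotic analysis via a Riemann--Hilbert problem, and de-Poissonization. The point of this route is that the discrete combinatorial object $L(\sigma_n)$ is too rigid to analyze directly at the fluctuation scale; Poissonization embeds it into a continuous family whose distribution admits an \emph{exact} integrable formula, and steepest-descent methods then deliver the Tracy--Widom limit.

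First I would Poissonize. Instead of fixing $n$, replace it with a Poisson random variable $N$ of mean $\lambda^2$, and let $L^{\mathrm{Poi}}_\lambda$ denote the LIS length of a permutation of random size $N$ (equivalently, the length of the longest up-right chain in a unit-intensity Poisson point process on $[0,\lambda]^2$). Then
\[
    \Pr\!\bigl(L^{\mathrm{Poi}}_\lambda \le k\bigr)
    \;=\; e^{-\lambda^2}\sum_{n=0}^\infty \frac{\lambda^{2n}}{n!}\,\Pr\!\bigl(L(\sigma_n)\le k\bigr).
\]
Next I would invoke the Gessel identity, which expresses this Poissonized tail as a Toeplitz determinant:
\[
    \Pr\!\bigl(L^{\mathrm{Poi}}_\lambda \le k\bigr)
    \;=\; e^{-\lambda^2}\,D_k(f), \qquad f(z)=e^{\lambda(z+z^{-1})},
\]
where $D_k(f)$ is the determinant of the $k\times k$ Toeplitz matrix whose entries are the Fourier coefficients of $f$ on the unit circle. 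This identity is the gateway from combinatorics to complex analysis: Gessel's theorem would be quoted as a black box, since an independent proof is not the point here.

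The technical heart of the argument is to extract the $\lambda\to\infty$ asymptotics of $D_k(f)$ in the critical scaling $k=\lfloor 2\lambda + t\lambda^{1/3}\rfloor$, which matches the scaling window $2\sqrt{n}+t n^{1/6}$ once we set $\lambda=\sqrt n$. I would realize $D_k(f)$ through the associated orthogonal polynomials on the unit circle with weight $f$, package them into a $2\times 2$ matrix Riemann--Hilbert problem, and perform the Deift--Zhou non-commutative steepest descent. The phase function $\lambda(z+z^{-1})$ has saddles at $z=\pm1$ that coalesce with the endpoints of the equilibrium support in the critical regime, forcing the construction of Airy parametrices in small disks around $\pm1$ and matching them to the outer global parametrix. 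The output is
\[
    e^{-\lambda^2}D_k(f) \;\longrightarrow\; F_{\mathrm{TW}}(t),
\]
with $F_{\mathrm{TW}}$ the Fredholm determinant of the Airy kernel on $(t,\infty)$. This Riemann--Hilbert steepest-descent analysis, in particular the uniform matching of the Airy parametrices with the outer solution at the edge, is where essentially all the difficulty of the theorem is concentrated, and it is the step I would expect to be the main obstacle.

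Finally I would de-Poissonize. The map $n\mapsto \Pr(L(\sigma_n)\le k)$ is monotone non-increasing in $n$, so a Tauberian-style lemma of Johansson, together with Chernoff tail bounds showing $|N-\lambda^2|=O(\lambda\log\lambda)$ with overwhelming probability, allows us to invert the Poisson mixture and transfer the limit back to the fixed-$n$ model. Setting $\lambda=\sqrt n$ and applying this de-Poissonization to both tails sandwiches $\Pr(L(\sigma_n)\le 2\sqrt n + t n^{1/6})$ between two expressions that share the common limit $F_{\mathrm{TW}}(t)$, completing the proof. This last step is essentially routine once the Riemann--Hilbert asymptotics in the previous stage have been established with sufficient uniformity in the parameter $t$.
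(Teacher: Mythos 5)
The paper does not prove this theorem; it is stated as a cited result from \citet{baik1999distribution}, with only the Painlev\'e~II characterization of $F_{\mathrm{TW}}$ appended, so there is no internal argument to compare yours against. Measured against the original literature, your outline is faithful: Poissonization to the Poisson point process on $[0,\lambda]^2$, Gessel's Toeplitz-determinant identity for the Poissonized distribution with symbol $e^{\lambda(z+z^{-1})}$, Deift--Zhou steepest descent for the associated Riemann--Hilbert problem in the critical window $k=\lfloor 2\lambda+t\lambda^{1/3}\rfloor$, and Johansson-style de-Poissonization using the monotonicity of $n\mapsto\Pr(L(\sigma_n)\le k)$ is exactly the architecture of the Baik--Deift--Johansson proof, and the scaling bookkeeping ($\lambda=\sqrt n$ turning $2\lambda+t\lambda^{1/3}$ into $2\sqrt n+tn^{1/6}$) is right.

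That said, what you have written is a roadmap, not a proof. Essentially all of the content of the theorem lives in the step you explicitly defer: the uniform (in $t$) Airy-edge asymptotics of $e^{-\lambda^2}D_k(f)$ via the equilibrium measure, the global parametrix, and the local parametrix construction. Without that analysis carried out, nothing has been established. One small factual correction for when you do attempt it: the critical coalescence for this symbol occurs at $z=-1$ (the endpoints of the supporting arc of the equilibrium measure close up there as $k/2\lambda\to 1$), so the Airy parametrix is built in a neighborhood of $-1$ rather than at both points $\pm 1$. Also note that the monotonicity of $\Pr(L(\sigma_n)\le k)$ in $n$, which you invoke for de-Poissonization, itself requires a short coupling argument (deleting a uniformly random point from a permutation of length $n+1$); it is true and standard, but it is not free.
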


\paragraph{Painlevé\,II representation of \(F_{\mathrm{TW}}\).}
The cumulative distribution function is given by
\[
 F_{\mathrm{TW}}(t)= \exp\!\Biggl(-\int_t^{\infty}(x-t)\,u(x)^2\,dx\Biggr),
\]
where \(u(x)\) is the solution to the Painlevé II equation
\[
 u''(x)=2u(x)^3+xu(x),
\]
with the asymptotic behavior \(u(x) \sim -\frac{e^{-2x^{3/2}/3}}{2\sqrt{\pi}\,x^{1/4}}\) as \(x \to \infty\) \cite{forrester2015painleve}.

% \paragraph{Painlevé\,II representation of \(F_{\mathrm{TW}}\).}
% The cumulative distribution function has the form
% \[
%   F_{\mathrm{TW}}(t)=
%   \exp\!\Bigl(
%      -\int_t^{\infty}(x-t)\,u(x)^2\,dx
%   \Bigr),
%   \]
%   \[
%   u''(x)=2u(x)^3+xu(x),\;
%   u(x)\sim-\frac{e^{-2x^{3/2}/3}}{2\sqrt{\pi}\,x^{1/4}}
%   \;(x\to\infty)
%   \quad
% \]\cite{forrester2015painleve}

\subsubsection*{Consequences for mean and variance}

Let
\[
  \mu_{\mathrm{TW}} = \int t\,dF_{\mathrm{TW}}(t)
  \quad\text{and}\quad
  \sigma_{\mathrm{TW}}^{2}
      =\int t^{2}\,dF_{\mathrm{TW}}(t)
       -\Bigl(\int t\,dF_{\mathrm{TW}}(t)\Bigr)^{2}.
\]
Numerically,
\(
  \mu_{\mathrm{TW}}\approx -1.77108680
\)
and
\(
  \sigma_{\mathrm{TW}}^{2}\approx0.81319479
\).
These constants yield the refined asymptotics
\[
   \mathbb{E}\!\bigl[L(\sigma_n)\bigr]
        = 2\sqrt{n} \;+\; \mu_{\mathrm{TW}}\,n^{1/6} + o\!\bigl(n^{1/6}\bigr),
   \qquad
   \operatorname{Var}\!\bigl[L(\sigma_n)\bigr]
        = \bigl(\sigma_{\mathrm{TW}}^{2}+o(1)\bigr)\,n^{1/3}.
\]

Accordingly, the deviations of \(L(\sigma_n)\) from its deterministic
\(\,2\sqrt{n}\) limit occur on the \(n^{1/6}\) scale and are governed, in law,
by the universal Tracy--Widom distribution.

% \section*{Acknowledgments}
% We would like to thank Simon Rubenstein-Salzedo, Samuel Armon, Carson Mitchell, and Alexander Nagengast for their guidance and support in the writing and research of this paper.

\setlength{\bibhang}{2em}
\setlength{\parindent}{0pt}
\setlength{\parskip}{0.5ex}
\bibliographystyle{abbrvnat}
\bibliography{refs}

\begin{thebibliography}{19}
\providecommand{\natexlab}[1]{#1}
\providecommand{\url}[1]{\texttt{#1}}
\expandafter\ifx\csname urlstyle\endcsname\relax
  \providecommand{\doi}[1]{doi: #1}\else
  \providecommand{\doi}{doi: \begingroup \urlstyle{rm}\Url}\fi

\bibitem[Bachmat et~al.(2006)Bachmat, Berend, Sapir, Skiena, and Stolyarov]{Bachmat_2006}
E.~Bachmat, D.~Berend, L.~Sapir, S.~Skiena, and N.~Stolyarov.
\newblock Analysis of aeroplane boarding via spacetime geometry and random matrix theory.
\newblock \emph{Journal of Physics A: Mathematical and General}, 39\penalty0 (29):\penalty0 L453--L459, July 2006.
\newblock ISSN 1361-6447.
\newblock \doi{10.1088/0305-4470/39/29/l01}.
\newblock URL \url{http://dx.doi.org/10.1088/0305-4470/39/29/L01}.

\bibitem[Baik et~al.(1999)Baik, Deift, and Johansson]{baik1999distribution}
J.~Baik, P.~Deift, and K.~Johansson.
\newblock On the distribution of the length of the longest increasing subsequence of random permutations.
\newblock \emph{Journal of the American Mathematical Society}, 12\penalty0 (4):\penalty0 1119--1178, 1999.

\bibitem[Erd{\"o}s and Szekeres(1935)]{erdos1935combinatorial}
P.~Erd{\"o}s and G.~Szekeres.
\newblock A combinatorial problem in geometry.
\newblock \emph{Compositio mathematica}, 2:\penalty0 463--470, 1935.

\bibitem[Forrester and Witte(2015)]{forrester2015painleve}
P.~J. Forrester and N.~S. Witte.
\newblock Painlev{\'e} ii in random matrix theory and related fields.
\newblock \emph{Constructive Approximation}, 41\penalty0 (3):\penalty0 589--613, 2015.

\bibitem[Frame et~al.(1954)Frame, Robinson, and Thrall]{frame1954hook}
J.~S. Frame, G.~d.~B. Robinson, and R.~M. Thrall.
\newblock Hook length formula.
\newblock \emph{Canadian Journal of Mathematics}, 6:\penalty0 316--325, 1954.

\bibitem[Fulton(1996)]{Fulton_1996}
W.~Fulton.
\newblock The robinson--schensted--knuth correspondence.
\newblock In \emph{Young Tableaux: With Applications to Representation Theory and Geometry}, London Mathematical Society Student Texts, pages 36--57. Cambridge University Press, 1996.

\bibitem[Greene et~al.(1982)Greene, Nijenhuis, and Wilf]{greene1982probabilistic}
C.~Greene, A.~Nijenhuis, and H.~S. Wilf.
\newblock A probabilistic proof of a formula for the number of young tableaux of a given shape.
\newblock In \emph{Young Tableaux in Combinatorics, Invariant Theory, and Algebra}, pages 17--22. Elsevier, 1982.

\bibitem[Hammersley(1972)]{hammersley1972few}
J.~M. Hammersley.
\newblock A few seedlings of research.
\newblock In \emph{Proc. Sixth Berkeley Symp. Math. Statist. and Probability}, volume~1, pages 345--394, 1972.

\bibitem[Hardy and Ramanujan(1918)]{HardyRamanujan1918}
G.~H. Hardy and S.~Ramanujan.
\newblock Asymptotic formulae in combinatory analysis.
\newblock \emph{Proceedings of the London Mathematical Society}, 17\penalty0 (1):\penalty0 75--115, 1918.

\bibitem[Knuth(1970)]{knuth1970permutations}
D.~Knuth.
\newblock Permutations, matrices, and generalized young tableaux.
\newblock \emph{Pacific journal of mathematics}, 34\penalty0 (3):\penalty0 709--727, 1970.

\bibitem[Logan and Shepp(1977)]{logan1977variational}
B.~F. Logan and L.~A. Shepp.
\newblock A variational problem for random young tableaux.
\newblock \emph{Advances in mathematics}, 26\penalty0 (2):\penalty0 206--222, 1977.

\bibitem[MacMahon(1916)]{macmahon1916combinatory}
P.~A. MacMahon.
\newblock \emph{Combinatory Analysis}.
\newblock Cambridge University Press, 1916.

\bibitem[Romik(2015)]{romik2015surprising}
D.~Romik.
\newblock \emph{The surprising mathematics of longest increasing subsequences}.
\newblock Number~4. Cambridge University Press, 2015.

\bibitem[Schensted(1961)]{schensted1961longest}
C.~Schensted.
\newblock Longest increasing and decreasing subsequences.
\newblock \emph{Canadian Journal of mathematics}, 13:\penalty0 179--191, 1961.

\bibitem[Shiyue and Andrew(2019)]{shiyue2019hooklength}
Shiyue and Andrew.
\newblock Young tableaux and probability.
\newblock Mathcamp 2019, August 2019.
\newblock URL \url{https://web.stanford.edu/~lindrew/hooklength.pdf}.

\bibitem[Stanley(2005)]{stanley2005increasing}
R.~P. Stanley.
\newblock Increasing and decreasing subsequences of permutations and their variants.
\newblock \emph{arXiv preprint math/0512035}, 2005.

\bibitem[Tracy and Widom(1994)]{tracy1994level}
C.~A. Tracy and H.~Widom.
\newblock Level-spacing distributions and the airy kernel.
\newblock \emph{Communications in Mathematical Physics}, 159:\penalty0 151--174, 1994.

\bibitem[Ulam(1961)]{ulam1961monte}
S.~M. Ulam.
\newblock Monte carlo calculations in problems of mathematical physics.
\newblock \emph{Modern Mathematics for the Engineers}, 261:\penalty0 281, 1961.

\bibitem[Vershik and Kerov(1977)]{vershik1977asymptotics}
A.~M. Vershik and S.~V. Kerov.
\newblock Asymptotics of the plancherel measure of the symmetric group and the limiting form of young tableaux.
\newblock In \emph{Doklady akademii nauk}, volume 233, pages 1024--1027. Russian Academy of Sciences, 1977.

\end{thebibliography}

\end{document}